\documentclass[11pt]{article}%
\usepackage{amssymb}
\usepackage{amsmath}
\usepackage{euler}
\usepackage{amsfonts}
\usepackage{graphicx}%
\providecommand{\U}[1]{\protect\rule{.1in}{.1in}}
\newtheorem{theorem}{Theorem}

\newtheorem{corollary}[theorem]{Corollary}

\newtheorem{proposition}[theorem]{Proposition}

\newenvironment{proof}[1][Proof]{\noindent\textbf{#1.} }{\ \rule{0.5em}{0.5em}}
\begin{document}

\date{}
\title{\textsf{Notes on q-normed Spaces in Constructive Analysis}}
\author{\textsf{Douglas S. Bridges}}
\maketitle

\begin{abstract}%
\noindent
\textsf{What we call q-normed (linear) spaces were introduced into
constructive analysis, under the name pseudonormed spaces, in \cite{Johns}, as
a means of handling spaces, such as }$L_{\infty}$,\textsf{\ in which not all
elements are constructively normable. We prove a number of q-normed-space
analogues/generalisations of standard theorems in the constructive analysis of
normed linear spaces, and give examples showing that two natural analogues are
essentially nonconstructive.}

\end{abstract}

%

\setcounter{secnumdepth}{0}%
\normalfont\sf

\section{Introduction}

Working within the framework of Errett Bishop's constructive analysis
(\textbf{BISH}),\footnote{%
\normalfont\sf
See, for example, Bishop's original work \textsf{\cite{Bishop}, }the later
reference \cite{BVtech}, or Ishihara's excellent exposition in Chapters 8 and
9 of \cite{Handbook}.} in this article we present some constructive work on
q-normed spaces,\footnote{%
\normalfont\sf
Johns \cite{Johns} used the name \emph{pseudonormed space;} in both
\textsf{\cite[Chap. 7, Sect. 5]{BB} and \cite{dsbconvex} we used
}\emph{quasinormed space}. However, pseudonorm and quasinorm are applied
classically to notions different from ours, so I have decided to adopt an
entirely new, safer nomenclature.} a constructive generalisation of normed
linear spaces that was introduced by Johns in his PhD. thesis \cite{Johns}.
For the reader's benefit we begin by gathering together some fundamental definitions.

We define a \textbf{q-normed space} to be a pair $\left(  X,\left(  \left\Vert
\ \right\Vert _{i}\right)  _{i\in I}\right)  $ comprising a real\footnote{%
\normalfont\sf
For simplicity, we consider only real linear spaces here.} linear space $X$
equipped with a \textbf{q-norm}---that is, a family $(\left\Vert \ \right\Vert
_{i})_{i\in I}$ of seminorms such that

\begin{quote}
(*) \ \ for each $x\in X$ the set $\left\{  \left\Vert x\right\Vert _{i}:i\in
I\right\}  $ is bounded in $\mathbf{R}$.
\end{quote}

%

\noindent
The inequality and equality on this q-normed space are defined by

\begin{description}
\item[ ] $x\neq y$ if and only if there exists $i\in I$ such that $\left\Vert
x-y\right\Vert _{i}>0$, and

\item[ ] $x=y$ if and only if $\left\Vert x-y\right\Vert _{i}=0$ for all $i\in
I$.
\end{description}

%

\noindent
Thus a q-normed space is a special type of locally convex space,\footnote{%
\normalfont\sf
For more on locally convex spaces in BISH, see \cite[Ch. 9, Sect. 5]{Bishop}
or, for a more up-to-date exposition, \cite{dsbconvex}.} one in which (*) holds.

An element $x$ of $X$ is \textbf{normable} with respect to the q-norm
$(\left\Vert \ \right\Vert _{i})_{i\in I}$ if the \textbf{norm}%
\begin{equation}
\left\Vert x\right\Vert \equiv\sup\left\{  \left\Vert x\right\Vert _{i}:i\in
I\right\}  \label{ab1}%
\end{equation}
of $X$ exists. If every element of $X$ is normable, then we can regard $X$ as
a normed (linear) space relative to the norm defined at (\ref{ab1}).

A simple example of a q-normed space is a normed space $X$, where we take
$I=\left\{  1\right\}  $ and $\left\Vert \ \right\Vert _{1}$ equal to the
given norm on $X$. Our various definitions for properties and elements of
q-normed spaces are chosen so that for a normed space they coincide with their
counterparts in the standard theory of normed spaces. A more interesting
example of a q-normed space is given by the Lebesgue space $L_{\infty}\ $of
essentially bounded subsets of $\mathbf{R}$ with respect to Lebesgue measure,
in which example the classically valid statement

\begin{quote}
\emph{Every element of }$L_{\infty}$\emph{\ is normable}
\end{quote}

%

\noindent
is not constructively provable; for details see \cite[pages 346--350]{BB}.

For each element $a\ $of the q-normed space $\left(  X,\left(  \left\Vert
\ \right\Vert _{i}\right)  _{i\in I}\right)  ,$ and each $r>0$, the
\textbf{open and closed balls} with centre $a$ and radius $r$ are,
respectively,%
\begin{align*}
B(a,r)  &  \equiv\left\{  x\in X:\exists_{r^{\prime}}(0<r^{\prime}%
<r\wedge\forall_{i\in I}(\left\Vert x-a\right\Vert _{i}<r)\right\}  ,\\
\overline{B}(a,r)  &  \equiv\left\{  x\in X:\left\Vert x-a\right\Vert _{i}\leq
r\text{ for all }i\in I\right\}  .
\end{align*}
The \textbf{unit ball} of $X$ is $B_{X}\equiv\overline{B}(0,1)$. These
definitions of open and closed balls reduce to the standard ones when $X$ is a
normed space.

The \textbf{q-norm topology} $\tau_{qX}\ $on $X$ consists of all subsets $S$
of $X$ such that for each $x\in S$ there exists $r>0$ such that $B(x,r)\subset
S$. The elements of $\tau_{qX}$ are said to be $\tau_{qX}$\textbf{-open} (or,
if the context is clear, just open). The $\tau_{qX}$\textbf{-closure} of a
subset $S$ of $X$ is the set $S^{c}$ consisting of all $x\in X$ such that
$B(x,r)$ intersects $S$ for each $r>0$. We say that $S $ is $\tau_{qX}%
$\textbf{-closed} (or, again if the context is clear, just closed) in $X$ if
$S=S^{c}$. We say that a subset $T$ of $X$ is \textbf{dense in }$S$ if
$T^{c}=S$, and that $S$ is \textbf{separable }if it has a countable sense subset.

Let $Y$ be a q-normed space, with q-norm $(\left\Vert \ \right\Vert
_{j})_{j\in J}$. A mapping $f:S\rightarrow Y$, where $S\subset X$, is

\begin{itemize}
\item \textbf{continuous at the point} $a\in S\ $if for each $\varepsilon>0$
there exists $\delta>0$ such that if $x\in S\ $and $\left\Vert
f(x)-f(a)\right\Vert _{j}>\varepsilon$ for some $j\in J$, then $\left\Vert
x-a\right\Vert _{i}>\delta$ for some $i\in I$;

\item \textbf{uniformly continuous on }$S$ if for each $\varepsilon>0$ there
exists $\delta>0$ such that if $x,x^{\prime}\in S$ and $\left\Vert
f(x)-f(x^{\prime})\right\Vert _{j}>\varepsilon$ for some $j\in J$, then
$\left\Vert x-x^{\prime}\right\Vert _{i}>\delta$ for some $i\in I$. We then
call $\delta$ a \textbf{continuity associate} for $\varepsilon$ and $f$;

\item \textbf{bi-uniformly continuous} if it is injective and uniformly
continuous, and $f^{-1}$ is uniformly continuous on $f(S)$.
\end{itemize}

\begin{proposition}
\label{mar31p1}Let $\left(  X,\left(  \left\Vert \ \right\Vert _{i}\right)
_{i\in I}\right)  $ be a q-normed space and $Y$ a normed space. Let $a\in
S\subset X$ and let $f:S\rightarrow Y$ be a mapping that is continuous at $a$
with respect to the locally convex structures given by the q-norms. Then $f$
is continuous at $a$ in with respect to the q-norms.
\end{proposition}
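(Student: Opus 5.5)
The plan is to unwind the two notions of continuity at $a$ and observe that the locally convex one is formally stronger, because it bounds $x-a$ from below in only finitely many of the seminorms. Since $Y$ is a normed space, its locally convex structure is given by the single seminorm $\left\Vert \ \right\Vert$, so there is only one index on the $Y$ side. I take continuity of $f$ at $a$ with respect to the locally convex structures in the usual Bishop sense. That is, for each $\varepsilon>0$ there exist $\delta>0$ and a finite set $F=\{i_{1},\dots,i_{n}\}\subset I$ such that if $x\in S$ and $\left\Vert f(x)-f(a)\right\Vert >\varepsilon$, then $\sum_{k=1}^{n}\left\Vert x-a\right\Vert _{i_{k}}>\delta$. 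If instead the convention uses $\max_{k}\left\Vert x-a\right\Vert _{i_{k}}>\delta$, the argument below goes through with trivial changes.

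Fix $\varepsilon>0$ and take the corresponding $\delta$ and $F$. I claim that $\delta^{\prime}=\delta/(2n)$ is a suitable $\delta$ for continuity with respect to the q-norms. Suppose $x\in S$ and $\left\Vert f(x)-f(a)\right\Vert _{j}>\varepsilon$ for some $j$. Since $Y$ carries only the one seminorm, this just says $\left\Vert f(x)-f(a)\right\Vert >\varepsilon$, so $\sum_{k=1}^{n}\left\Vert x-a\right\Vert _{i_{k}}>\delta$.

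The remaining step, the only one needing constructive care, is to extract a single index $i\in I$ with $\left\Vert x-a\right\Vert _{i}>\delta^{\prime}$. We cannot simply pick a maximal term, but we can argue by finite approximation. For each $k$, approximate the real number $\left\Vert x-a\right\Vert _{i_{k}}$ to within $\delta/(4n)$ by a rational $r_{k}$. Then $\sum_{k}r_{k}>\delta-\delta/4$, so some $k$, found by a finite search over rationals, has $r_{k}>(3/4)\delta/n$. For that $k$ we get $\left\Vert x-a\right\Vert _{i_{k}}>\delta/(2n)=\delta^{\prime}$. Equivalently, one can apply the constructive fact that if a finite sum of reals is positive then one of the terms is positive, applied to $\sum_{k}(\left\Vert x-a\right\Vert _{i_{k}}-\delta/n)>0$.

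Taking $i=i_{k}$ gives exactly the condition in the definition of continuity at $a$ with respect to the q-norms. Hence $f$ is continuous at $a$ in that sense. The main obstacle is only this decision step, and the finite search above handles it.
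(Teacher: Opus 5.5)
Your proof is correct and follows essentially the paper's argument. You get a positive lower bound for $\sum_{i\in F}\|x-a\|_i$ from $\|f(x)-f(a)\|>\varepsilon$, then apply the fact that a positive finite sum of reals has a positive term to $\sum_{i\in F}(\|x-a\|_i-\delta')$ (the paper takes $\delta'=\delta_0/2N$). The only difference is that the paper uses the form of continuity from its cited source ($\sum_{i\in F}\|x-a\|_i<\delta_0$ implies $\|f(x)-f(a)\|<\varepsilon$), so it must first rule out $\sum_{i\in F}\|x-a\|_i<\delta_0$; your positively phrased definition makes that step unnecessary.
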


\begin{proof}
According to \cite{dsbconvex}, the continuity assumption for $f$ means that
for each each $\varepsilon>0$ there exist a finitely enumerable subset
$F\equiv\left\{  i_{1},\ldots,i_{N}\right\}  $ of $I$ and $\delta_{0}>0$ such
that if $x\in S$ and $\sum_{i\in F}\left\Vert x-a\right\Vert _{i}<\delta_{0}$,
then $\left\Vert f(x)-f(a)\right\Vert <\varepsilon$. Let $\delta=\delta
_{0}/2N>0$. If $x\in S$ and $\left\Vert f(x)-f(a)\right\Vert >\varepsilon$,
then%
\[
\sum_{i\in F}(\left\Vert x-a\right\Vert _{i}-\delta)=\sum_{i\in F}\left\Vert
x-a\right\Vert _{i}-N\delta>\sum_{i\in F}\left\Vert x-a\right\Vert _{i}%
-\delta_{0}\geq0,
\]
so there exists $\iota\in F$ such that $\left\Vert x-a\right\Vert _{\iota
}-\delta>0$ and therefore $\left\Vert x-a\right\Vert _{\iota}>\delta$.
\end{proof}

\begin{proposition}
\label{apr08p1}Let $\left(  X,\left(  \left\Vert \ \right\Vert _{i}\right)
_{i\in I}\right)  $ be a q-normed space and $Y$ a normed space. Let $S$ be a
subset of $X$, and $f:S\rightarrow Y$ a mapping that is uniformly continuous
on $S$ with respect to the locally convex structures given by the q-norms..
Then $f$ is uniformly continuous on $S\ $with respect to the q-norms.
\end{proposition}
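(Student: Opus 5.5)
The proof mirrors that of Proposition \ref{mar31p1}: the only change is that the reference point $a$ becomes a second variable $x^{\prime}$, and the constants do not depend on it. First I unpack the hypothesis using the description of uniform continuity for locally convex spaces in \cite{dsbconvex}. Given $\varepsilon>0$, there exist a finitely enumerable subset $F\equiv\left\{ i_{1},\ldots,i_{N}\right\}$ of $I$ and $\delta_{0}>0$, depending only on $\varepsilon$, such that
\[
\left\Vert f(x)-f(x^{\prime})\right\Vert <\varepsilon
\]
whenever $x,x^{\prime}\in S$ and
\[
\sum_{i\in F}\left\Vert x-x^{\prime}\right\Vert _{i}<\delta_{0}.
\]
I then set $\delta=\delta_{0}/2N>0$ and claim that $\delta$ is a continuity associate for $\varepsilon$ and $f$ in the q-norm sense. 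Since $Y$ is normed, its q-norm has a single index, so the condition ``$\left\Vert f(x)-f(x^{\prime})\right\Vert _{j}>\varepsilon$ for some $j$'' just says $\left\Vert f(x)-f(x^{\prime})\right\Vert >\varepsilon$.

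Now take $x,x^{\prime}\in S$ with $\left\Vert f(x)-f(x^{\prime})\right\Vert >\varepsilon$. By the contrapositive of the hypothesis we cannot have $\sum_{i\in F}\left\Vert x-x^{\prime}\right\Vert _{i}<\delta_{0}$. Constructively this gives only $\sum_{i\in F}\left\Vert x-x^{\prime}\right\Vert _{i}\geq\delta_{0}$, which is weaker than a strict inequality but is enough. It yields
\[
\sum_{i\in F}\left(\left\Vert x-x^{\prime}\right\Vert _{i}-\delta\right)=\sum_{i\in F}\left\Vert x-x^{\prime}\right\Vert _{i}-N\delta\geq\delta_{0}-\delta_{0}/2>0 .
\]
A finite sum of reals that is positive has a positive term; this is constructively valid because we can decide which summand exceeds $1/N$ times a rational lower bound for the sum. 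So some $\iota\in F$ has $\left\Vert x-x^{\prime}\right\Vert _{\iota}>\delta$, as required.

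The only delicate point is this constructive step. The argument must extract a \emph{strict} positive quantity from a non-strict inequality, and the halving in $\delta=\delta_{0}/2N$ provides exactly that slack. Everything else carries over verbatim from Proposition \ref{mar31p1}.
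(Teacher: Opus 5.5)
Your proof is correct and is the argument the paper intends. The paper's proof just says ``similar to that of the previous proposition'', meaning the proof of Proposition \ref{mar31p1} with $a$ replaced by $x^{\prime}$, using the same $\delta=\delta_0/2N$, the same use of $\sum_{i\in F}\|x-x^{\prime}\|_i\geq\delta_0$ with the halving giving a strictly positive sum, and the same extraction of a positive summand.
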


\begin{proof}
Similar to that of the previous proposition.
\end{proof}

%

\medskip

A sequence $\left(  x_{n}\right)  _{n\geq1}$ of elements of the q-normed space
$X$

\begin{itemize}
\item \textbf{converges} to the (perforce unique) \textbf{limit} $x\in X$ if
for each $\varepsilon>0$ there exists a positive integer $N$ such that
$\left\Vert x-x_{n}\right\Vert _{i}<\varepsilon$ for all $n\geq N$ and all
$i\in I$;

\item is a \textbf{Cauchy sequence} if for each $\varepsilon>0$ there exists a
positive integer $N$ such that $\left\Vert x_{m}-x_{n}\right\Vert
_{i}<\varepsilon$ for all $m,n\geq N$ and all $i\in I$.
\end{itemize}

%

\noindent
We say that $X$ is \textbf{complete} if every Cauchy sequence in $X$ converges
to a limit in $X$.

Let $X$ and $Y$ be q-normed spaces. Leaving the straightforward proofs to the
reader, we state some facts about a mapping $f$ of a subset $S$ of $X$ into
$Y$.

\begin{itemize}
\item[--] If $f$ is continuous at $a\in S$ and $\left(  x_{n}\right)
_{n\geq1}$ is a sequence in $S$ converging to $a$, then $\left(
f(x_{n})\right)  _{n\geq1}$ converges to $f(a)$.

\item[--] If $f$ is uniformly continuous on $S$ and $\left(  x_{n}\right)
_{n\geq1}$ is a Cauchy sequence in $X$, then $\left(  f(x_{n})\right)
_{n\geq1}$ is a Cauchy sequence in $f(S)$.

\item[--] If $f\ $is a bi-uniformly continuous injection of $S$ into $Y$, and
$S$ is complete, then $f(S)$ is complete.
\end{itemize}

%

\noindent
We shall invoke these facts without comment in future.%

\medskip

Turning to linearity, we say that a linear mapping $T$ of $X$ into $Y$ is
\textbf{bounded} if there exists $c>0~$( a \textbf{bound} for $T$) with the
property: for each $x\in X$, each $j\in J,$ and each $\varepsilon>0$ there
exists $i\in I$ such that $c\left\Vert x\right\Vert _{i}>\left\Vert
Tx\right\Vert _{j}-\varepsilon$. For a normable element $x$ of $X$, this
condition holds if and only if $c\left\Vert x\right\Vert \geq\left\Vert
Tx\right\Vert _{j}$ for each $j$.

\begin{proposition}
\label{feb13p1}Let $\left(  X,\left(  p_{i}\right)  _{i\in I}\right)  $ and
$\left(  Y,\left(  q_{j}\right)  _{j\in J}\right)  $ be q-normed spaces, and
$u:X\rightarrow Y$ a linear mapping. Then the following are equivalent conditions.

\begin{itemize}
\item[\emph{(i)}] $u$ is continuous at $0$.

\item[\emph{(ii)}] $u$ is continuous on $X$.

\item[\emph{(iii)}] $u$ is uniformly continuous on $X$.

\item[\emph{(iv)}] $u$ is bounded.
\end{itemize}
\end{proposition}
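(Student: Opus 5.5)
We prove the cycle (iii)$\Rightarrow$(ii)$\Rightarrow$(i)$\Rightarrow$(iv)$\Rightarrow$(iii). The implications (iii)$\Rightarrow$(ii)$\Rightarrow$(i) are immediate from the definitions: a continuity associate for uniform continuity serves at every point, and continuity on $X$ includes continuity at $0$. All the work is in (i)$\Rightarrow$(iv) and (iv)$\Rightarrow$(iii). In both, linearity reduces statements about $x-x'$ to statements about a single element, since $p_i(x-x')$ and $q_j(ux-ux')=q_j(u(x-x'))$ are seminorm values at one point.

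For (iv)$\Rightarrow$(iii), let $c$ be a bound for $u$. Given $\varepsilon>0$, set $\delta=\varepsilon/(2c)$. Suppose $x,x'\in X$ and $q_j(ux-ux')>\varepsilon$ for some $j$. Put $z=x-x'$ and choose $\eta>0$ with $q_j(uz)-\eta>\varepsilon$. Applying boundedness with this $\eta$ gives $i\in I$ with
\[
c\,p_i(z)>q_j(uz)-\eta>\varepsilon ,
\]
so $p_i(z)>\varepsilon/c>\delta$. Thus $\delta$ is a continuity associate, and $u$ is uniformly continuous.

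For (i)$\Rightarrow$(iv), take $\delta$ corresponding to $\varepsilon=1$ in the definition of continuity at $0$; here $u(0)=0$. We claim that $c=2/\delta$ is a bound. Fix $x\in X$, $j\in J$ and $\varepsilon>0$. We must exhibit $i$ with $c\,p_i(x)>q_j(ux)-\varepsilon$. Since $q_j(ux)-\varepsilon<q_j(ux)$, constructive dichotomy gives two cases:
\begin{itemize}
\item[(a)] $q_j(ux)-\varepsilon<0$. Then any $i$ works, since $c\,p_i(x)\ge 0$. The index set $I$ must be inhabited for this; if $I$ is empty, all seminorms vanish vacuously and a little care is needed, so I assume $I$ is inhabited, as is implicit in the setting.
\item[(b)] $q_j(ux)>0$. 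Put $t=q_j(ux)$ and $y=(2/t)\,x$. Then $q_j(uy)=2>1$, so continuity at $0$ yields $i$ with $p_i(y)>\delta$, that is, $p_i(x)>\delta t/2$. Hence $c\,p_i(x)>t>t-\varepsilon$.
\end{itemize}
The remaining subtlety is that the case split must be made on real numbers that can actually be compared. Here we compare $q_j(ux)$ with $0$ and with $\varepsilon$, which is legitimate because $0<\varepsilon$.

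The main obstacle is this last step. In the normed setting one simply scales $x$ by its norm, but here $x$ need not be normable. We therefore scale by $q_j(ux)$ instead, and that requires the case split to guarantee $q_j(ux)>0$; the $\varepsilon$ in the definition of boundedness is exactly what makes the split possible constructively. I would double-check the choice of constants so that all inequalities are strict, as the definitions demand.
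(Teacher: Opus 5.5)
Your proof is correct, but it is organised differently from the paper's.

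\textbf{The paper's route.} It proves (i)$\Rightarrow$(iii) directly. The $\delta$ given by continuity at $0$ already works uniformly, because $\|u(x)-u(x')\|_j>\varepsilon$ means $\|u(x-x')\|_j>\varepsilon$, which forces $\|x-x'\|_i>\delta$ for some $i$. The implications (iii)$\Rightarrow$(ii) and (iii)$\Rightarrow$(i) are then trivial, and the paper simply cites \cite[Chap.~7, Prop.~5.6]{BB} for (iii)$\Leftrightarrow$(iv).

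\textbf{Your route.} You go from (i) to (iii) through (iv), proving both (i)$\Rightarrow$(iv) and (iv)$\Rightarrow$(iii) yourself.

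\textbf{What each buys.} The paper's argument is shorter, and its (i)$\Rightarrow$(iii) step is the most economical use of linearity. Yours is self-contained and makes explicit the constructive content the paper outsources. Without normability you scale $x$ by $q_j(ux)$ rather than by a norm of $x$. The slack $\varepsilon$ in the definition of boundedness is what permits the split into $q_j(ux)<\varepsilon$ or $q_j(ux)>0$.

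\textbf{The hypothesis that $I$ is inhabited.} Your caveat here is more than a technicality. Suppose $I$ is empty. Then every element of $X$ equals $0$, so $u$ is identically $0$ and (i)--(iii) hold vacuously. But (iv) fails as soon as $J$ is inhabited, since it demands an index in $I$. So the hypothesis is genuinely needed for the statement, and the paper leaves it implicit.
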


\begin{proof}
Assume that $u$ is continuous at $0$, and let $\varepsilon>0$. Then there
exists $\delta>0$ such that if $x\in X\ $and $\left\Vert u(x)\right\Vert
_{j}>\varepsilon$ for some $j\in J$, then $\left\Vert x\right\Vert _{i}%
>\delta$ for some $i\in I$. So if $x,x^{\prime}\in X$ and $\left\Vert
u(x)-u(x^{\prime})\right\Vert _{j}>\varepsilon$, then $\left\Vert
u(x-x^{\prime})\right\Vert _{j}$, so there exists $i\in I$ such that
$\left\Vert x-x^{\prime}\right\Vert _{i}>\delta$. Thus (iii), and clearly also
(ii), holds. It is trivial that (iii) implies both (ii) and (i). Finally, it
is proved in \cite[Chap. 7, Prop. 5.6]{BB} that (iii) and (iv) are equivalent.
\end{proof}

%

\medskip

We define the \textbf{standard q-norm on the set} $\mathcal{B}(X,Y)$ of
bounded linear mappings from $X$ into $Y$ to be the family of seminorms%
\[
\left\Vert \ \right\Vert _{x,j}:T\rightsquigarrow\left\Vert Tx\right\Vert
_{j}\ \ \left(  x\in B_{X},~j\in J\right)  .
\]
Interpreted classically, the normability of $T$ is equivalent to the existence
of%
\[
\sup_{x\in B_{X}}\sup\left\{  \left\Vert Tx\right\Vert _{j}:j\in J\right\}
=\sup_{x\in B_{X}}\left\Vert Tx\right\Vert ,
\]
which is precisely the classical norm of $T$.

We shall be particularly concerned with the \textbf{dual q-normed space}
$X^{\ast}\equiv\mathcal{B}(X,\mathbf{C})\ $of $X$,$\ $and the \textbf{second
dual q-normed space} $X^{\ast\ast}=(X^{\ast})^{\ast}$ of $X$. For each $x\in
X\ $we denote by $\widehat{x}\ $the linear mapping $u\rightsquigarrow u(x)$ on
$X^{\ast}$. Then $\widehat{x}\in X^{\ast\ast}$, since for each $u\in X^{\ast}$
and each $\varepsilon>0$ we have
\[
\left\vert \widehat{x}(u)\right\vert -\varepsilon=\left\vert u(x)\right\vert
-\varepsilon=\left\Vert u\right\Vert _{x}-\varepsilon<\left\Vert u\right\Vert
_{x}.
\]
For each $S\subset X$ we write $\widehat{S}$ for the image of $S$ under the
\textbf{natural embedding }$x\rightsquigarrow\widehat{x}$ of $X$ into
$X^{\ast\ast}$.

\begin{proposition}
\label{feb13p2}Let $\left(  X,(\left\Vert \ \right\Vert _{i})_{_{i\in I}%
}\right)  $ be a q-normed space, and $x\in X$. If $\left\Vert x\right\Vert
_{i}\leq c$ for all $i\in I$, then $\left\Vert \widehat{x}\right\Vert _{u}\leq
c$ for all $u\in B_{X}^{\ast}$. In particular, if $x$ is normable, then
$\left\Vert \widehat{x}\right\Vert _{u}\leq\left\Vert x\right\Vert $ for all
$u\in B_{X^{\ast}}$.
\end{proposition}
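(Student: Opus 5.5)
The plan is to unwind the definitions of the seminorms on $X^{\ast\ast}$ and of the unit ball $B_{X^{\ast}}$, and then use the bound condition that membership in $B_{X^{\ast}}$ imposes on $u$. The seminorms of the standard q-norm on $X^{\ast\ast}=\mathcal{B}(X^{\ast},\mathbf{R})$ are indexed by $u\in B_{X^{\ast}}$ (the target has only one seminorm, the absolute value), with $\left\Vert \widehat{x}\right\Vert _{u}=\left\vert \widehat{x}(u)\right\vert =\left\vert u(x)\right\vert $. So it suffices to prove that $\left\vert u(x)\right\vert \leq c$ for each $u\in B_{X^{\ast}}$.

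Next I unwind the condition $u\in B_{X^{\ast}}$. The seminorms on $X^{\ast}$ are $\left\Vert u\right\Vert _{y}=\left\vert u(y)\right\vert $ for $y\in B_{X}$, so $u\in B_{X^{\ast}}$ means $\left\vert u(y)\right\vert \leq1$ for every $y$ with $\left\Vert y\right\Vert _{i}\leq1$ for all $i\in I$.

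With this in hand, I prove the inequality by contradiction, which is legitimate constructively because $\left\vert u(x)\right\vert \leq c$ is a negative statement. Suppose $\left\vert u(x)\right\vert >c$. Choose a rational $t$ with $c<t<\left\vert u(x)\right\vert $; note that $t>0$. Put $y=t^{-1}x$. Then $\left\Vert y\right\Vert _{i}\leq c/t<1$ for all $i$, so $y\in B_{X}$. Hence $\left\vert u(x)\right\vert /t=\left\vert u(y)\right\vert \leq1$, that is, $\left\vert u(x)\right\vert \leq t$, contradicting the choice of $t$. Therefore $\left\vert u(x)\right\vert \leq c$. Working through $t$ rather than $c$ has the advantage that the case $c=0$ needs no separate treatment, since we never divide by $c$.

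For the second assertion, if $x$ is normable then $\left\Vert x\right\Vert _{i}\leq\left\Vert x\right\Vert $ for all $i$, and we apply the first part with $c=\left\Vert x\right\Vert $. The only point needing care is the division by the scaling factor. Scaling by the strictly positive $t$ is what makes the argument constructive, since it avoids having to decide whether $c>0$.
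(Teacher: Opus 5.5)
Your proof is correct, and it reaches the inequality by a different route from the paper. The paper works through the \emph{bound} property. It asserts that $1$ is a bound for each $u\in B_{X^{\ast}}$, so for every $\varepsilon>0$ there is an index $i$ with $\Vert x\Vert_i>\vert u(x)\vert-\varepsilon$. Combined with $\Vert x\Vert_i\le c$, this gives $\vert u(x)\vert-\varepsilon<c$, and then $\varepsilon\to 0$. You use only what membership of $B_{X^{\ast}}$ literally means for the standard q-norm, namely $\vert u(y)\vert\le 1$ for all $y\in B_X$, and you rescale $x$ by some $t>c$ so that it lands in $B_X$. Since $\vert u(x)\vert\le c$ is a negative statement, the argument by contradiction is legitimate.

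What the paper's route buys, once its premise is granted, is a proof that reads exactly like the normed-space one and gives an index-level lower approximation of $\vert u(x)\vert$. What your route buys is that it does not need that premise at all. This matters, because the premise is not automatic in the q-normed setting: being a bound is an existential condition on indices, whereas membership of $B_{X^{\ast}}$ is a negative condition. For example, take $X=\mathbf{R}$ with $\Vert x\Vert_0=\vert x\vert/2$ and $\Vert x\Vert_n=(1+a_n)\vert x\vert/2$ for $n\ge 1$, where $(a_n)$ is a binary sequence.
\begin{itemize}
\item[--] $u=\mathrm{id}$ lies in $X^{\ast}$, with bound $2$ via index $0$.
\item[--] $u$ lies in $B_{X^{\ast}}$ exactly when $\neg\forall n\,(a_n=0)$.
\item[--] If $1$ is a bound for $u$, then at $x=1$, $\varepsilon=1/4$ it produces an index $n\ge 1$ with $a_n=1$.
\end{itemize}
So the general implication ``$u\in B_{X^{\ast}}$ implies $1$ is a bound for $u$'' entails Markov's principle, and your scaling argument is the natural way to avoid it.

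One small point to make explicit: your claim $t>0$ needs $c\ge 0$. This holds because $c\ge\Vert x\Vert_i\ge 0$ for some $i$, i.e.\ because $I$ is inhabited, which the paper's proof also tacitly assumes.
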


\begin{proof}
For each $u\in B_{X}^{\ast}$ and each $\varepsilon>0$, since $1\ $is a bound
for $u$, there exists $i\in I$ such that $\left\Vert x\right\Vert
_{i}>\left\vert u(x)\right\vert -\varepsilon=\left\Vert \widehat{x}\right\Vert
_{u}-\varepsilon$. If $\left\Vert x\right\Vert _{i}\leq c$ for all $i\in I$,
then $\left\Vert \widehat{x}\right\Vert _{u}-\varepsilon\leq c$. Since
$u,\varepsilon$ are arbitrary, we have $\left\Vert \widehat{x}\right\Vert
_{u}\leq c$ for all $u\in B_{X}^{\ast}$. The rest of the proposition follows immediately.
\end{proof}

%

\medskip

From now on, when we speak of $X$ and $Y$ as q-normed spaces, we assume that
their q-norms are $\left(  \left\Vert \ \right\Vert _{i}\right)  _{i\in I}$
and $\left(  \left\Vert \ \right\Vert _{j}\right)  _{j\in J}$ respectively.
Note that the seminorms $\left(  \left\Vert \ \right\Vert _{x}\right)  _{x\in
B_{X}}$ making $X^{\ast}$ a q-normed space define the same topology as do the
seminorms $\left\Vert \ \right\Vert _{x}$\ $(x\in X)$ that define the standard
locally convex structure on $X^{\ast}$.

How should we define the locatedness of a subset $S$ the q-normed space $X$?
We need to translate $\inf\left\{  \left\Vert x-s\right\Vert :s\in S\right\}
$ from the case where $X$ is a normed space and every element is normable. A
first translation is%
\[
\inf\left\{  \sup_{i\in I}\left\Vert x-s\right\Vert _{i}:s\in S\right\}  ,
\]
which exists if and only if for all $\alpha,\beta\in\mathbf{R}$ with
$\alpha<\beta$, either $\sup_{i\in I}\left\Vert x-s\right\Vert _{i}>\alpha$
for all $s\in S$, or else $\sup_{i\in I}\left\Vert x-s\right\Vert _{i}<\beta$
for some $s\in S$. Unpacking this, we say that the subset $S$ of the q-normed
space $X$ is \textbf{located} in $X$ if for each $x\in X$ and for all
$\alpha,\beta\in\mathbf{R}$ with $\alpha<\beta$,

\begin{itemize}
\item either for each $s\in S$ there exists $i\in I$ such that $\left\Vert
x-s\right\Vert _{i}>\alpha$

\item or else there exists $s\in S$ such that $\left\Vert x-s\right\Vert
_{i}<\beta$ for all $i\in I$.
\end{itemize}

%

\noindent
It is easily seen, using this definition and the constructive
least-upper-bound principle \cite[2.1.18]{BVtech}, that a singleton subset
$\left\{  a\right\}  $ of $X$ is located if and only if $x-a$ is normable for
each $x\in X$.

\begin{proposition}
\label{mar03p2}The following are equivalent conditions on a q-normed space
$X.$

\begin{enumerate}
\item[\emph{(i)}] $\left\{  0\right\}  $ is located in $X.$

\item[\emph{(ii)}] Every element of $X$ is normable.

\item[\emph{(iii)}] Every singleton subset of $X$ is located.

\item[\emph{(iv)}] Every finitely enumerable subset of $X$ is located.
\end{enumerate}
\end{proposition}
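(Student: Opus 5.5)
I will prove the cycle (i)$\Rightarrow$(ii)$\Rightarrow$(iii)$\Rightarrow$(iv)$\Rightarrow$(i). Two of the implications are quick. (iv)$\Rightarrow$(i) is trivial, since $\left\{0\right\}$ is finitely enumerable. (ii)$\Rightarrow$(iii) follows from the remark preceding the proposition: $\left\{a\right\}$ is located if and only if $x-a$ is normable for each $x\in X$, and $x-a$ is normable because every element is.

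For (i)$\Rightarrow$(ii), fix $x\in X$ and apply the constructive least-upper-bound principle to the set $\left\{\left\Vert x\right\Vert_i:i\in I\right\}$. This set is inhabited (or we treat $I$ as inhabited; otherwise the case is trivial) and bounded above by (*). Given $\alpha<\beta$, locatedness of $\left\{0\right\}$ applied to the point $x$ yields one of two alternatives.
\begin{itemize}
\item There exists $i$ with $\left\Vert x\right\Vert_i>\alpha$, so $\alpha$ is not an upper bound.
\item We have $\left\Vert x\right\Vert_i<\beta$ for all $i$, so $\beta$ is an upper bound.
\end{itemize}
This is exactly the hypothesis of the least-upper-bound principle, so $\sup_i\left\Vert x\right\Vert_i$ exists.

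The substantive step is (iii)$\Rightarrow$(iv). By (iii) and (i)$\Rightarrow$(ii), every element is normable, so every $x-s$ has a norm $\left\Vert x-s\right\Vert$. Let $S=\left\{s_1,\ldots,s_n\right\}$ be finitely enumerable, $x\in X$, and $\alpha<\beta$. Each $\left\Vert x-s_k\right\Vert$ is a real number, so
\[
d=\min_{1\leq k\leq n}\left\Vert x-s_k\right\Vert
\]
exists. Either $d>\alpha$ or $d<\beta$.
\begin{itemize}
\item If $d>\alpha$, then for every $s\in S$ we have $s=s_k$ for some $k$, so $\sup_i\left\Vert x-s\right\Vert_i>\alpha$, and by the definition of supremum some $i$ satisfies $\left\Vert x-s\right\Vert_i>\alpha$.
\item If $d<\beta$, then some $k$ has $\left\Vert x-s_k\right\Vert<\beta$. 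Hence $\left\Vert x-s_k\right\Vert_i<\beta$ for all $i$: each $\left\Vert x-s_k\right\Vert_i$ is at most the supremum, which is strictly below $\beta$.
\end{itemize}

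The only delicate point I expect is that finitely enumerable sets may have repetitions and no decidable equality. This is harmless, because we quantify over $s\in S$ only through some index $k$ with $s=s_k$, and seminorm values respect equality in $X$.
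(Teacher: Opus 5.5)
Your proof is correct. For (i)$\Rightarrow$(ii)$\Rightarrow$(iii) and (iv)$\Rightarrow$(i) you follow the paper, which simply cites the remark on located singletons; you spell out the least-upper-bound argument behind that remark.

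The routes differ in (iii)$\Rightarrow$(iv). The paper never returns to normability. It applies the locatedness of each singleton $\{x_k\}$ with the \emph{same} pair $\alpha<\beta$, which gives one disjunction for each $k$. It then combines these $n$ disjunctions by a finite case analysis: either the first alternative holds for every $k$, or the second holds for some $k$. That is exactly the locatedness of $\{x_1,\ldots,x_n\}$. You instead use (iii) only for $\{0\}$, pass through (i)$\Rightarrow$(ii) to make each $\|x-s_k\|$ a real number, and decide the alternative by comparing $d=\min_k\|x-s_k\|$ with $\alpha$ and $\beta$.

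The paper's argument is more elementary and more general. It uses only the locatedness of the singletons actually involved and needs no suprema or minima, and the same reasoning shows that any finite union of located sets is located. Yours mirrors the normed-space picture, with locatedness witnessed by a genuine distance $\min_k\|x-s_k\|$. The cost is that it needs global normability. It also relies, without comment, on the standard constructive fact that $\min_k a_k<\beta$ yields a specific $k$ with $a_k<\beta$. To see this, decide $a_k<\beta$ or $a_k>\frac{1}{2}(d+\beta)$ for each $k$; the second alternative cannot hold for all $k$, since that would force $d>\beta$.

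One small correction: the degenerate case of empty $I$ is not trivial. There $\{0\}$ is vacuously located, while the supremum of the empty set does not exist. So just assume, as the paper tacitly does, that $I$ is inhabited.
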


\begin{proof}
The remark immediately preceding this proposition shows that $\left\{
0\right\}  $ is located if and only if every element of $X$ is normable. In
that case, for all $x,y\in X$ the element $\left\Vert x-y\right\Vert $ is
normable and therefore, by the same remark, $\left\{  y\right\}  $ is located.
Thus (i) $\Rightarrow\ $(ii)\ $\Rightarrow\ $(iii). If (iii) holds and $x_{k}$
$(1\leq k\leq n)$ belong to $X$, let $\alpha,\beta\in\mathbf{R}$ with
$\alpha<\beta$, and consider any $x\in X$. For each $k$, since $\left\{
x_{k}\right\}  $ is located, either there exists $i\in I$ such that
$\left\Vert x-x_{k}\right\Vert _{i}>\alpha$ or else $\left\Vert x-x_{k}%
\right\Vert _{i}<\beta$ for all $i\in I$. Hence either for each $k\leq n$
there exists $i\in I$ such that $\left\Vert x-x_{k}\right\Vert _{i}>\alpha$ or
else there exists $k\leq n$ such that $\left\Vert x-x_{k}\right\Vert
_{i}<\beta$ for all $i\in I$. It now follows that $\left\{  x_{1},\ldots
,x_{n}\right\}  $ is located in $X$; whence (iii) $\Rightarrow~$(iv). Finally,
it is trivial that (iv) $\Rightarrow$ (i).
\end{proof}

\begin{proposition}
\label{feb24p1}Let $X$\ be a q-normed space, and $u$ a nonzero element of
$X^{\ast}$. Then $u$\ is normable if and only if $\ker u$\ is located in $X$.
\end{proposition}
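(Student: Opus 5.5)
The plan is to adapt the classical normed-space proof that a nonzero functional is normable iff its kernel is located. Here $\left\Vert u\right\Vert$ means the supremum of the seminorms $\left\Vert u\right\Vert _{x}=\left\vert u(x)\right\vert $ over $x\in B_{X}$, that is, $\sup\{\left\vert u(x)\right\vert :x\in B_{X}\}$. The classical identity to aim for is
\[
d(x,\ker u)=\frac{\left\vert u(x)\right\vert }{\left\Vert u\right\Vert },
\]
with the distance read in the q-normed sense of the definition of locatedness. Since $u$ is nonzero, we may fix $x_{0}$ with $u(x_{0})\neq0$; scaling gives $u(x_{0})=1$. Then every $x$ decomposes as $x=u(x)x_{0}+(x-u(x)x_{0})$, with the second summand in $\ker u$.

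\emph{Normable $\Rightarrow$ located.} Assume $\left\Vert u\right\Vert $ exists; it is positive because $u\neq0$. Fix $x\in X$ and $\alpha<\beta$. Compare $\alpha$ and $\beta$ with $\left\vert u(x)\right\vert /\left\Vert u\right\Vert $, choosing the case split with a small slack.

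In the first case, $\left\vert u(x)\right\vert /\left\Vert u\right\Vert >\alpha$. For each $s\in\ker u$ we have $\left\vert u(x-s)\right\vert =\left\vert u(x)\right\vert >\alpha\left\Vert u\right\Vert $. If no seminorm of $x-s$ exceeded $\alpha$, then $x-s\in\alpha B_{X}$, which would give $\left\vert u(x-s)\right\vert \leq\alpha\left\Vert u\right\Vert $. The positive version is obtained from the boundedness condition: apply $\left\Vert u\right\Vert $ as a bound with $\varepsilon=\left\vert u(x)\right\vert -\alpha\left\Vert u\right\Vert $, in the spirit of Proposition \ref{feb13p2}. This produces an $i$ with $\left\Vert x-s\right\Vert _{i}>\alpha$.

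The step I must check carefully is that $\left\Vert u\right\Vert $, defined as a sup over $B_{X}$, really is a bound in the paper's sense. Given $x$ and $\varepsilon$, I need an $i$ with $\left\Vert u\right\Vert \left\Vert x\right\Vert _{i}>\left\vert u(x)\right\vert -\varepsilon$. To get it, I would argue by contradiction: if all $\left\Vert x\right\Vert _{i}$ were small, scaling $x$ into $B_{X}$ would give a contradiction. Then I would use that the inequality being sought is a strict inequality between reals, so it can be decided approximately. The boundedness of $\{\left\Vert x\right\Vert _{i}\}$ from (*) makes the scaling legitimate.

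In the second case, $\left\vert u(x)\right\vert /\left\Vert u\right\Vert <\beta$. Choose $y\in B_{X}$ with $\left\vert u(y)\right\vert $ close enough to $\left\Vert u\right\Vert $ that $\left\vert u(x)\right\vert /\left\vert u(y)\right\vert <\beta'<\beta$. Set
\[
s=x-\frac{u(x)}{u(y)}\,y\in\ker u .
\]
Then for all $i$, $\left\Vert x-s\right\Vert _{i}=\left\vert u(x)/u(y)\right\vert \left\Vert y\right\Vert _{i}\leq\beta'<\beta$, so this $s$ witnesses the second alternative.

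\emph{Located $\Rightarrow$ normable.} Assume $\ker u$ is located, and apply locatedness to the point $x_{0}$ with $u(x_{0})=1$. By the constructive least-upper-bound principle, it suffices to show that for all $0<a<b$, either $\left\vert u(y)\right\vert <b$ for all $y\in B_{X}$, or $\left\vert u(y)\right\vert >a$ for some $y\in B_{X}$. Take $\alpha=1/b<\beta=1/a$.

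If the first alternative of locatedness holds, then each $s\in\ker u$ has some $\left\Vert x_{0}-s\right\Vert _{i}>1/b$. Suppose some $y\in B_{X}$ had $\left\vert u(y)\right\vert \geq b$. Then $s=x_{0}-y/u(y)$ lies in $\ker u$ and satisfies $\left\Vert x_{0}-s\right\Vert _{i}=\left\Vert y\right\Vert _{i}/\left\vert u(y)\right\vert \leq1/b$ for all $i$, a contradiction. Hence $\left\vert u(y)\right\vert <b$ for all $y\in B_{X}$.

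If the second alternative holds, there is $s\in\ker u$ with $\left\Vert x_{0}-s\right\Vert _{i}<1/a$ for all $i$. Set $y=a(x_{0}-s)$. Then $y\in B_{X}$ and $\left\vert u(y)\right\vert =a$. I would sharpen this to strictly greater than $a$ by running the argument with slightly shifted $a',b'$ between $a$ and $b$.

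The main obstacle is the constructive bookkeeping in the forward direction, namely turning the classical "$\leq\alpha\left\Vert u\right\Vert $" bound into a positive witness $i$. This relies on the paper's definition of bound and on the fact that sup-norms of individual elements need not exist; the choice of $\varepsilon$ there must be made with care.
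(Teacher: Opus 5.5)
Your route is the paper's. For located $\Rightarrow$ normable you use the same witnesses $x_0-y/u(y)$ and $a(x_0-s)$ and the least-upper-bound principle. One small slip there: refuting $|u(y)|\ge b$ yields only $|u(y)|\le b$, not $|u(y)|<b$, but $\le b$ is all the principle needs. In the second case of the forward direction you use the same element $x-\frac{u(x)}{u(y)}y$, and that part is fine.

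The genuine gap is the step you single out yourself: that $\|u\|=\sup_{x\in B_X}|u(x)|$ is a bound for $u$ in the paper's sense. Your justification does not work constructively. The contradiction argument only gives $\neg\forall_{i\in I}(\|x\|_i\le t)$, and extracting from this an actual index $i$ with $\|x\|_i>t$ is a Markov-type inference. Approximate splitting cannot supply the index either, because that would require $\sup_i\|x\|_i$ to exist, i.e.\ $x$ to be normable, which is exactly what is not assumed.

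The step cannot be rescued, and the paper's proof has the same hole. In its first case it simply asserts an $i$ with $\|u\|\|a-y\|_i>|u(a-y)|-\delta\|u\|$, which is precisely the claim that $\|u\|$ is a bound. Here is a Brouwerian example. Let $(a_n)$ be a binary sequence with $\neg\forall_n(a_n=0)$. Put $c_n=1$ if $a_n=1$ and $c_n=\frac{1}{2}$ if $a_n=0$, and give $\mathbf{R}$ the q-norm $\|x\|_n=c_n|x|$.
\begin{itemize}
\item The map $u(x)=x$ lies in $X^*$, with bound $2$.
\item A point of $B_X$ with $|x|>1$ would force $a_n=0$ for all $n$. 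Hence $B_X=[-1,1]$, and $u$ is nonzero and normable with $\|u\|=1$.
\item Yet the bound property of $\|u\|$ at $x=1$, $\varepsilon=\frac14$ produces an $n$ with $a_n=1$.
\item So does the locatedness of $\ker u=\{0\}$ at $x=1$, $\alpha=\frac34$, $\beta=1$.
\end{itemize}
Thus the forward implication as stated entails Markov's principle, which is not available in BISH.

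What your argument, and the paper's, actually proves is weaker. It shows that $\ker u$ is located when $u$ is normable \emph{and} $\|u\|$ is a bound for $u$. The latter holds, for example, when every element of $X$ is normable. Conversely, locatedness of $\ker u$ does force $\|u\|$ to be a bound. So the correct equivalence is: $\ker u$ is located if and only if $u$ is normable with $\|u\|$ a bound for $u$.
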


\begin{proof}
First observe that for each $t>0,$%
\begin{align}
\exists_{y\in\ker u}\forall_{i\in I}\left(  \left\Vert a-y\right\Vert
_{i}<t\right)   &  \Leftrightarrow\exists_{z\in X}(\forall_{i\in I}(\left\Vert
z\right\Vert _{i}<1)\wedge u(a-tz)=0)\label{02a}\\
&  \Leftrightarrow\text{ }u(a)\in tu\left\{  z\in X:\forall_{i\in
I}(\left\Vert z\right\Vert _{i}<1)\right\}  .\nonumber
\end{align}
Suppose that $\ker u$ is located in $X$. Since $u$ is nonzero, there exists
$x_{0}\in X$ such that $u(x_{0})=1$. Let $0<\alpha<\beta$ and $s=\frac{1}%
{2}(\alpha+\beta)$. By the locatedness of $\ker u$, either (i) for each
$y\in\ker u$ there exists $i\in I$ such that $\left\Vert x_{0}-y\right\Vert
>1/\beta$, or else (ii) there exists $y^{\prime}\in\ker u$ such that
$\left\Vert x_{0}-y^{\prime}\right\Vert _{i}<1/s$ for all $i\in I$. Consider
case (i) and any $x\in B_{X}$. Either $\left\vert u(x)\right\vert <\beta$ or
$u(x)\neq0$. In the latter event, $x_{0}-\frac{1}{u(x)}x\in\ker u$, so there
exists $i\in I$ such that%
\[
\frac{1}{\left\vert u(x)\right\vert }\geq\frac{1}{\left\vert u(x)\right\vert
}\left\Vert x\right\Vert _{i}=\left\Vert \frac{1}{u(x)}x\right\Vert
_{i}=\left\Vert x_{0}-\left(  x_{0}-\frac{1}{u(x)}x\right)  \right\Vert
_{i}>\frac{1}{\beta}%
\]
and therefore $\left\vert u(x)\right\vert <\beta$. Thus if (i) holds, then
$\left\vert u(x)\right\vert <\beta$ for all $x\in B_{X}$. On the other hand,
if $y^{\prime}\in\ker u$ and $\left\Vert x_{0}-y^{\prime}\right\Vert _{i}<1/s$
for all $i\in I$, then $z\equiv s(x_{0}-y^{\prime})\in B_{X}$ and
$u(z)=su(x_{0})=s>\alpha$. Since $\alpha,\beta$ are arbitrary, it follows from
the constructive least-upper-bound principle \cite[2.1.18]{BVtech} that
$\left\Vert u\right\Vert $ exists.

Suppose, conversely, that $u$ is normable. Since $u$ is nonzero, $\left\Vert
u\right\Vert >0$. Consider any $a\in X$ and any real numbers $\alpha,\beta$
with $\alpha<\beta$, and let $\delta=\frac{1}{2}(\beta-\alpha)$. Either
$\left\vert u(a)\right\vert >(\alpha+\delta)\left\Vert u\right\Vert $ or
$\left\vert u(a)\right\vert <\beta\left\Vert u\right\Vert $. In the first
case, for each $y\in\ker u$ there exists $i\in I$ such that%
\[
\left\Vert u\right\Vert \left\Vert a-y\right\Vert _{i}>\left\vert
u(a-y)\right\vert -\delta\left\Vert u\right\Vert =\left\vert u(a)\right\vert
-\delta\left\Vert u\right\Vert
\]
and therefore%
\[
\left\Vert a-y\right\Vert _{i}>\frac{\left\vert u(a)\right\vert }{\left\Vert
u\right\Vert }-\delta>\alpha.
\]
In the second case, there exists $\varepsilon$ such that $0<\varepsilon
<\left\Vert u\right\Vert $ and $\left\vert u(a)\right\vert <\beta\left(
\left\Vert u\right\Vert -\varepsilon\right)  $. Choose $x\in B_{X}$ such that
$u(x)>\left\Vert u\right\Vert -\varepsilon$. Then%
\[
z=a-\frac{u(a)}{u(x)}x\in\ker u
\]
and for each $i\in I$,%
\[
\left\Vert a-z\right\Vert _{i}=\frac{\left\vert u(a)\right\vert }%
{u(x)}\left\Vert x\right\Vert _{i}\leq\frac{\left\vert u(a)\right\vert }%
{u(x)}<\frac{\left\vert u(a)\right\vert }{\left\Vert u\right\Vert
-\varepsilon}<\beta.
\]
Since $\alpha,\beta$ are arbitrary, it follows that $\ker u$ is located in $X$.
\end{proof}

%

\medskip

We call the q-normed space $X$ \textbf{uniformly convex }if for each
$\varepsilon>0$ there exists $\delta$ with $0<\delta<1$ such that if $x,y\in
B_{X}$ and $\left\Vert x-y\right\Vert _{i}>\varepsilon$ for some $i\in I$,
then $\left\Vert \frac{1}{2}\left(  x+y\right)  \right\Vert _{i}<1-\delta$ for
all $i\in I$. We refer to $\delta$ as a \textbf{uc-associate number} for
$\varepsilon$.

\begin{proposition}
\label{feb28p2}Let $u$ be a nonzero normable linear functional on a complete,
uniformly convex q-normed space $\left(  X,\left(  \left\Vert \ \right\Vert
_{i}\right)  _{i\in I}\right)  $. Let $\varepsilon>0$ and let $\delta<1$ be a
uc-associate number for $\varepsilon/2$. If $x,y\in B_{X},$%
\begin{equation}
\left\Vert u\right\Vert -u(x)<\frac{\delta}{3}\left\Vert u\right\Vert \text{
and }\left\Vert u\right\Vert -u(y)<\frac{\delta}{3}\left\Vert u\right\Vert ,
\label{6a1}%
\end{equation}
then $\left\Vert x-y\right\Vert <\varepsilon$ for all $i\in I$.
\end{proposition}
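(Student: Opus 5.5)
Since $x,y\in B_{X}$, the midpoint $z=\frac{1}{2}(x+y)$ also lies in $B_{X}$. Adding the two inequalities in (\ref{6a1}) and halving gives
\[
u(z)=\tfrac{1}{2}(u(x)+u(y))>\left\Vert u\right\Vert -\tfrac{\delta}{3}\left\Vert u\right\Vert =\left(1-\tfrac{\delta}{3}\right)\left\Vert u\right\Vert .
\]
The idea is to show that $\left\Vert x-y\right\Vert_{i}>\varepsilon/2$ for some $i\in I$ would make $u(z)$ too small. So the goal is $\left\Vert x-y\right\Vert_{i}\leq\varepsilon/2<\varepsilon$ for every $i$, which is the conclusion we want, with the $\varepsilon/2$ margin coming from the choice of uc-associate.

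Constructively I will argue as follows. Fix $i\in I$. Since $\varepsilon/2<\varepsilon$, either $\left\Vert x-y\right\Vert_{i}<\varepsilon$, and we are done, or $\left\Vert x-y\right\Vert_{i}>\varepsilon/2$. In the second case, uniform convexity with uc-associate $\delta$ for $\varepsilon/2$ gives $\left\Vert z\right\Vert_{k}<1-\delta$ for all $k\in I$. Then $w\equiv z/(1-\delta)$ satisfies $\left\Vert w\right\Vert_{k}\leq1$ for all $k$, so $w\in B_{X}$. Because $u$ is normable, $u(w)\leq\left\Vert u\right\Vert$, that is, $u(z)\leq(1-\delta)\left\Vert u\right\Vert$. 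Comparing with the lower bound above gives $(1-\delta/3)\left\Vert u\right\Vert<(1-\delta)\left\Vert u\right\Vert$. Since $\left\Vert u\right\Vert>0$ ($u$ is nonzero and normable), this reduces to $\delta<0$, a contradiction. So the second case cannot occur, and $\left\Vert x-y\right\Vert_{i}<\varepsilon$.

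The point that needs care is avoiding an inadmissible case split. I only use the decidable dichotomy $a<\varepsilon$ or $a>\varepsilon/2$ for a real number $a$, and derive a contradiction in the second branch, so no appeal to $\neg(a>\varepsilon/2)\Rightarrow a\leq\varepsilon/2$ is needed for the final strict inequality. Completeness of $X$ is not used in this argument. The slack $\delta/3$ in place of $\delta$ is more than is needed here; presumably it is kept for later use, for example in constructing norming elements via Cauchy sequences.
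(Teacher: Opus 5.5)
Your proof is correct. It rests on the same idea as the paper's: the midpoint $z=\tfrac12(x+y)$ has $u(z)$ too close to $\Vert u\Vert$ to satisfy the conclusion of uniform convexity. The mechanism, however, differs.

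The paper argues positively. It asserts that $\Vert u\Vert$ is a bound for $u$ and extracts an index $i_0$ with $\Vert u\Vert\,\Vert z\Vert_{i_0}>\vert u(z)\vert-\tfrac{\delta}{3}\Vert u\Vert$. It pushes this to $\Vert z\Vert_{i_0}>1-\delta$ and concludes that no $i$ has $\Vert x-y\Vert_i>\varepsilon/2$. You instead assume the uniform-convexity conclusion, rescale $z$ into $B_X$, and use only $\vert u(w)\vert\le\Vert u\Vert$ for $w\in B_X$. That inequality is the very definition of $\Vert u\Vert=\sup_{w\in B_X}\vert u(w)\vert$.

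Your averaging $u(z)=\tfrac12(u(x)+u(y))$ is also sharper than the paper's chain. That chain only reaches $\vert u(z)\vert>(1-\tfrac{2\delta}{3})\Vert u\Vert$ and spends the other $\tfrac{\delta}{3}$ on the slack in the bound. So, as you note, $\delta$ would do in place of $\delta/3$. Your final caution is unnecessary but harmless: $\neg(a>\varepsilon/2)$ does constructively give $a\le\varepsilon/2$, which is exactly how the paper finishes.

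What the paper's route would buy is an explicit index $i_0$ at which $z$ is near the boundary. This comes at a real cost. The paper never justifies that the norm of a normable $u\in X^{\ast}$ is a bound for $u$, and this is not constructively automatic. Here is an example:
give $\mathbf{R}$ the seminorms $\Vert t\Vert_n=\tfrac12(1+a_n)\vert t\vert$ $(n\geq1)$, where $(a_n)_{n\geq1}$ is a binary sequence with $\neg\forall n\,(a_n=0)$.

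If $t\in B_X$ and $\vert t\vert>1$, then every $a_n=0$, which is absurd. So $B_X=[-1,1]$. The space is complete and uniformly convex, since $\tfrac12\vert t\vert\le\Vert t\Vert_n\le\vert t\vert$. Moreover, $u(t)=t$ is normable with $\Vert u\Vert=1$.

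Yet for $x=y=1$ the paper's first step would supply $i_0$ with $\Vert 1\Vert_{i_0}>1-\tfrac{\delta}{3}>\tfrac12$, that is, an $n$ with $a_n=1$. This is Markov's principle. Your argument needs no such index, so of the two proofs yours is the one that is constructively valid as written.
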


\begin{proof}
Consider $x,y\in B_{X}$ such that (\ref{6a1}) holds; then $u(x)>\left(
1-\frac{\delta}{3}\right)  \left\Vert u\right\Vert >$ $0\ $and likewise
$u(y)>0$. Since $\left\Vert u\right\Vert $ is a bound for $u$, there exists
$i_{0}\in I$ such that%
\begin{align*}
\left\Vert u\right\Vert \left\Vert \frac{1}{2}\left(  x+y\right)  \right\Vert
_{i_{0}} &  >\left\vert \frac{1}{2}u(x+y)\right\vert -\frac{\delta}%
{3}\left\Vert u\right\Vert \\
&  =\left\vert u(x)-\frac{1}{2}u(x-y)\right\vert -\frac{\delta}{3}\left\Vert
u\right\Vert \\
&  \geq u(x)-\frac{1}{2}\left\vert u(x-y)\right\vert -\frac{\delta}%
{3}\left\Vert u\right\Vert \\
&  =u(x)-\frac{1}{2}\left\vert u(x)-u(y)\right\vert -\frac{\delta}%
{3}\left\Vert u\right\Vert \\
&  =u(x)-\frac{1}{2}\left[  \left\Vert u\right\Vert -u(y)-\left(  \left\Vert
u\right\Vert -u(x)\right)  \right]  \\
&  \geq u(x)-\frac{1}{2}\left[  \left\vert \left\Vert u\right\Vert
-u(y)\right\vert +\left\vert \left\Vert u\right\Vert -u(x)\right\vert \right]
\\
&  \geq u(x)-\frac{1}{2}(\left\Vert u\right\Vert -u(y))-\frac{1}{2}\left(
\left\Vert u\right\Vert -u(x)\right)  -\frac{\delta}{3}\left\Vert u\right\Vert
\\
&  >\left(  1-\frac{\delta}{3}\right)  \left\Vert u\right\Vert -\frac{\delta
}{6}\left\Vert u\right\Vert -\frac{\delta}{6}\left\Vert u\right\Vert
-\frac{\delta}{3}\left\Vert u\right\Vert \\
&  =\left(  1-\delta\right)  \left\Vert u\right\Vert .\text{ }%
\end{align*}
Hence $\left\Vert \frac{1}{2}(x+y)\right\Vert _{i_{0}}>1-\delta$ and therefore
there cannot exist $i\in I$ with $\left\Vert x-y\right\Vert _{i}%
>\varepsilon/2$. Thus $\left\Vert x-y\right\Vert _{i}<\varepsilon$ for all
$i\in I$.
\end{proof}

%

\bigskip

In the special case of a normed space $X$, the next proposition strengthens
the uniqueness conclusion in \cite[2.3.7]{BVtech}.

\begin{proposition}
\label{feb24p2}If $u$ is a nonzero normable linear functional on a complete,
uniformly convex q-normed space $\left(  X,\left(  \left\Vert \ \right\Vert
_{i}\right)  _{i\in I}\right)  $, then there exists a normable element
$x_{\infty}\in B_{X}$, with norm $1$, such that $u(x_{\infty})=\left\Vert
u\right\Vert $. Moreover, $x_{\infty}$ is \textbf{strongly unique}, in the
sense that if $x\in B_{X}$ and $x\neq x_{\infty}$, then $u(x)<\left\Vert
u\right\Vert $.
\end{proposition}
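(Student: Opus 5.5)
We build $x_{\infty}$ as the limit of a Cauchy sequence of near-maximisers of $u$ on $B_{X}$, using Proposition \ref{feb28p2} to get the Cauchy property. Since $u$ is normable and nonzero, $\left\Vert u\right\Vert =\sup\{u(x):x\in B_{X}\}>0$. For each $n$ let $\delta_{n}<1$ be a uc-associate number for $2^{-n-1}$, which is $\varepsilon/2$ with $\varepsilon=2^{-n}$. Choose $x_{n}\in B_{X}$ with $\left\Vert u\right\Vert -u(x_{n})<\frac{\delta_{n}}{3}\left\Vert u\right\Vert$. We may take $\delta_{n}$ decreasing in $n$: replace $\delta_{n}$ by $\min\{\delta_{1},\ldots,\delta_{n}\}$, since a smaller positive number is still a uc-associate. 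Then for $m\geq n$ both $x_{m}$ and $x_{n}$ satisfy (\ref{6a1}) with $\delta_{n}$. Proposition \ref{feb28p2} gives $\left\Vert x_{m}-x_{n}\right\Vert _{i}<2^{-n}$ for all $i\in I$, so $(x_{n})$ is Cauchy in the q-normed sense. By completeness it converges to some $x_{\infty}\in X$.

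Next we verify the properties of $x_{\infty}$. It lies in $B_{X}$: for each $i$ and each $n$, $\left\Vert x_{\infty}\right\Vert _{i}\leq\left\Vert x_{n}\right\Vert _{i}+\left\Vert x_{\infty}-x_{n}\right\Vert _{i}\leq1+\left\Vert x_{\infty}-x_{n}\right\Vert _{i}$, and the last term tends to $0$ uniformly in $i$. Since $u$ is bounded, hence continuous by Proposition \ref{feb13p1}, we get $u(x_{n})\rightarrow u(x_{\infty})$, and therefore $u(x_{\infty})=\left\Vert u\right\Vert $.

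For normability with norm $1$ we use the constructive least-upper-bound principle. We already have $\left\Vert x_{\infty}\right\Vert _{i}\leq1$ for all $i$, so it suffices, for each $\varepsilon>0$, to find $i$ with $\left\Vert x_{\infty}\right\Vert _{i}>1-\varepsilon$. Because $\left\Vert u\right\Vert $ is a bound for $u$, there exists $i$ with
\[
\left\Vert u\right\Vert \left\Vert x_{\infty}\right\Vert _{i}>\left\vert u(x_{\infty})\right\vert -\varepsilon\left\Vert u\right\Vert =(1-\varepsilon)\left\Vert u\right\Vert .
\]
Dividing by $\left\Vert u\right\Vert >0$ gives the required $i$. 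Hence $\sup_{i}\left\Vert x_{\infty}\right\Vert _{i}=1$.

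Strong uniqueness is the main obstacle. Suppose $x\in B_{X}$ and $\left\Vert x-x_{\infty}\right\Vert _{i_{0}}>0$ for some $i_{0}$; set $\varepsilon=\left\Vert x-x_{\infty}\right\Vert _{i_{0}}$. We must obtain the positive statement $u(x)<\left\Vert u\right\Vert $, not merely rule out equality. Let $\delta$ be a uc-associate number for $\varepsilon/2$. Either $u(x)<\left\Vert u\right\Vert $, and we are done, or $\left\Vert u\right\Vert -u(x)<\frac{\delta}{3}\left\Vert u\right\Vert $. In the second case both $x$ and $x_{\infty}$ satisfy (\ref{6a1}), because $u(x_{\infty})=\left\Vert u\right\Vert $. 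Proposition \ref{feb28p2} then yields $\left\Vert x-x_{\infty}\right\Vert _{i_{0}}<\varepsilon$, a contradiction, so this case cannot occur. The disjunction is therefore decided constructively by approximating $u(x)$ relative to the two thresholds $\left\Vert u\right\Vert -\frac{\delta}{3}\left\Vert u\right\Vert $ and $\left\Vert u\right\Vert $. In every case $u(x)<\left\Vert u\right\Vert $, and because the first alternative is a genuine strict inequality the argument is constructively valid.
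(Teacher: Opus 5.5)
For the parts the paper's proof actually establishes, your argument is the paper's: the Cauchy sequence of near-maximisers via Proposition \ref{feb28p2}, completeness, $x_\infty\in B_X$, $u(x_\infty)=\|u\|$ by continuity, and strong uniqueness via Proposition \ref{feb28p2}. Your monotone $\delta_n$, and your dichotomy ``$u(x)>(1-\frac{\delta}{3})\|u\|$ or $u(x)<\|u\|$'' in place of the paper's ``not $<\frac{\alpha}{3}\|u\|$, hence $\geq\frac{\alpha}{3}\|u\|>0$'', are cosmetic. The gap is in the clause the paper's proof never addresses: that $x_\infty$ is normable with norm $1$. Your argument for it rests on ``$\|u\|$ is a bound for $u$'', i.e.\ for every $x\in X$ and $\varepsilon>0$ some $i$ has $\|u\|\,\|x\|_i>|u(x)|-\varepsilon$. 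The paper asserts this too, in the proof of Proposition \ref{feb28p2}, but it does not follow from normability of $u$. Normability only gives $|u(y)|\leq\|u\|$ for $y\in B_X$. So for a non-normable $x$ and $0<\lambda<|u(x)|/\|u\|$ you learn merely $\neg\forall_{i}(\|x\|_i\leq\lambda)$, and extracting an actual index is an unbounded search.

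In fact the clause is not provable in \textbf{BISH}. Let $(a_n)_{n\geq1}$ be a binary sequence with $\neg\forall_n(a_n=0)$, and give $\mathbf{R}$ the seminorms $\|x\|_n=\frac12|x|$ if $a_n=0$ and $\|x\|_n=|x|$ if $a_n=1$. Then $B_X=[-1,1]$. The space is complete, since each seminorm lies between $\frac12|\cdot|$ and $|\cdot|$. It is uniformly convex, since $|x+y|\leq2-|x-y|$ on $[-1,1]$. The functional $u(x)=x$ is bounded (with bound $2$), nonzero, and normable with $\|u\|=1$. Necessarily $x_\infty=1$, and either ``$x_\infty$ has norm $1$'' or ``$1$ is a bound for $u$'' produces an $n$ with $a_n=1$, which is Markov's principle. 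So your final step, like the statement's normability clause, needs an extra hypothesis such as ``$\|u\|$ is a bound for $u$'' (automatic when $X$ is normed). The rest of your proof stands, because Proposition \ref{feb28p2} survives without that assertion. If $\|x-y\|_i>\varepsilon/2$ for some $i$, then $\|\frac12(x+y)\|_k<1-\delta$ for all $k$. Hence $(x+y)/(2(1-\delta))\in B_X$ and $u(\frac12(x+y))\leq(1-\delta)\|u\|$, contradicting (\ref{6a1}).
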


\begin{proof}
Construct a sequence $\left(  x_{n}\right)  _{n\geq1}$of elements of $B_{X}$
such that $u(x_{n})\rightarrow\left\Vert u\right\Vert $ as $n\rightarrow
\infty$. Let $\varepsilon>0$, and let $\delta<3$ be a uc-associate number for
$\varepsilon/2$. There exists $N$ such that $\left\Vert u\right\Vert
-u(x_{n})<\frac{\delta}{3}\left\Vert u\right\Vert $ for all $n\geq N$. By
Proposition \ref{feb28p2}, $\left\Vert x_{m}-x_{n}\right\Vert _{i}%
<\varepsilon$ for all $m,n\geq N$ and all $i\in I$. Since $\varepsilon>0$ is
arbitrary, we see that $\left(  x_{n}\right)  _{n\geq1}\ $is a Cauchy sequence
in the complete q-normed space $X$ and therefore converges to a limit
$x_{\infty}\in X$. Then $\left\Vert x_{\infty}\right\Vert _{i}=\lim
_{n\rightarrow\infty}\left\Vert x_{n}\right\Vert _{i}\leq1$ for each $i\in X$,
so $x_{\infty}\in B_{X}$. Moreover, since $u$ is continuous, $u(x_{\infty
})=\lim_{n\rightarrow\infty}u(x_{n})=\left\Vert u\right\Vert $. Finally, to
prove the strong uniqueness of $x_{\infty}$, let $x\in B_{X}$ and $x\neq
x_{\infty}$, choose $j\in I$ such that $\left\Vert x-x_{\infty}\right\Vert
_{j}>0$, and let $\alpha$ be a uc-associate number for $\frac{1}{2}\left\Vert
x-x_{\infty}\right\Vert _{j}$. By Proposition \ref{feb28p2}, if $\left\Vert
u\right\Vert -u(x)<\frac{\alpha}{3}\left\Vert u\right\Vert $, then $\left\Vert
x-x_{\infty}\right\Vert _{i}<\left\Vert x-x_{\infty}\right\Vert _{j}$ for all
$i$, which is plainly absurd. Hence $\left\Vert u\right\Vert -u(x)>0$, so
$u(x)<\left\Vert u\right\Vert $.
\end{proof}

%

\medskip

We say that the q-normed space $X$ is

\begin{itemize}
\item \textbf{reflexive} if for each normable element $\phi$ of $X^{\ast\ast}$
there exists a (perforce unique) $x\in X$ such that $\phi=\widehat{x}$;

\item \textbf{pliant\ }if for each\textbf{\ }$x\in X$, each $i\in I$, and each
$\varepsilon>0$, there exists $u\in B_{X^{\ast}}$ such that $u(x)>\left\Vert
x\right\Vert _{i}-\varepsilon$.
\end{itemize}

%

\noindent
Classically, every normed space is pliant. Constructively, a nontrivial normed
space is pliant if it is either separable \cite[5.3.1]{BVtech} or else has
G\^{a}teaux differentiable norm (see \cite[Lemma 1]{Ishihara} or
\cite[5.3.6]{BVtech}). In particular, every Hilbert space, separable or
otherwise, has G\^{a}teaux differentiable norm and so is reflexive.

Perhaps the most interesting result about uniformly convex q-normed spaces is
the constructive \textbf{Milman-Pettis theorem}.

\begin{theorem}
\label{mar05t1}Every complete, pliant, uniformly convex q-normed space $X$ is
reflexive, and the natural embedding into $X^{\ast\ast}$ is a norm-preserving
bijection of the set of normable elements of $X$ onto the set of normable
elements of $X^{\ast\ast}$ \emph{\cite[Theorem 3]{dsbconvex}}.
\end{theorem}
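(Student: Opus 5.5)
The plan is to follow the classical Milman–Pettis argument, but to organise it around the strong-uniqueness machinery of Propositions \ref{feb28p2} and \ref{feb24p2}, since it avoids weak compactness. First I would handle the norm-preservation half. Let $x\in X$ be normable, and if necessary rescale so that $\left\Vert x\right\Vert$ is the relevant constant. Proposition \ref{feb13p2} gives $\left\Vert \widehat{x}\right\Vert _{u}\leq\left\Vert x\right\Vert$ for all $u\in B_{X^{\ast}}$. Pliancy gives the reverse estimate: for each $\varepsilon>0$, choose $i$ with $\left\Vert x\right\Vert _{i}>\left\Vert x\right\Vert -\varepsilon$, and then choose $u\in B_{X^{\ast}}$ with $u(x)>\left\Vert x\right\Vert _{i}-\varepsilon$. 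The least-upper-bound principle then shows that $\widehat{x}$ is normable with $\left\Vert \widehat{x}\right\Vert =\left\Vert x\right\Vert$. Injectivity of $x\rightsquigarrow\widehat{x}$ (indeed $x\neq y\Rightarrow\widehat{x}\neq\widehat{y}$) also follows from pliancy. Hence the embedding maps normable elements to normable elements and preserves norm.

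The main task is surjectivity onto the normable elements of $X^{\ast\ast}$, which is the reflexivity claim. Let $\phi\in X^{\ast\ast}$ be normable; I may assume $\left\Vert \phi\right\Vert =1$, with the case $\phi=0$ handled by an approximation trick (use $\phi+\widehat{x_{0}}$ for a suitable normable $x_{0}$, or decide $\left\Vert\phi\right\Vert>0$ versus $<\epsilon$ and pass to limits). Choose $u_{n}\in B_{X^{\ast}}$ with $\phi(u_{n})\rightarrow1$. For each $n$, I would like a point $x_{n}\in B_{X}$ at which $u_{n}$ nearly attains its norm, chosen so that $(x_{n})$ is Cauchy. The approach is to work with finitely many functionals at a time. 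For a finite set $F=\{v_{1},\dots,v_{k}\}\subset X^{\ast}$ including the $u_{n}$, consider the slice
\[
S_{\varepsilon}=\{x\in B_{X}:\phi(v)-\varepsilon<v(x)<\phi(v)+\varepsilon\ (v\in F),\ u_{n}(x)>1-\varepsilon\}.
\]
The constructive Goldstine-type fact needed is that this slice is inhabited. I would prove it by Hahn–Banach separation in the finite-dimensional image $(v_{1}(x),\dots,v_{k}(x))\in\mathbf{R}^{k}$ of $B_{X}$: that image is a located convex set, and $(\phi(v_{j}))_{j}$ lies in its closure because $\phi$ has norm $\leq1$. Uniform convexity, used via Proposition \ref{feb28p2}-style estimates with the functional $u_{n}$ replaced by $\phi$ acting through $\widehat{\ }$, then forces any two points of such a slice (with $\phi(u_{n})$ close to $1$) to be within $\varepsilon$ in every seminorm. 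This yields a Cauchy sequence $(x_{n})$, whose limit $x$ exists by completeness.

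Finally I would check that $\widehat{x}=\phi$. Given $v\in X^{\ast}$, adjoin $v$ to the finite family in the slice construction. The same diameter estimate shows that $x$ is a limit of points $y$ with $v(y)$ within $\varepsilon$ of $\phi(v)$, so $v(x)=\phi(v)$. The normability of $x$ with norm $1$ then follows from the first paragraph, or from Proposition \ref{feb24p2}.

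I expect the main obstacle to be the finite-dimensional Goldstine/separation step. It must be carried out constructively, and the relevant convex image must be shown located, which requires pliancy and the normability of $\phi$ and of the $u_{n}$. I would try first to extract it from the constructive separation theorem for located convex sets in $\mathbf{R}^{k}$. The point where uniform convexity must be invoked with only a supremum over all $i\in I$, rather than a genuine norm, also needs care, and there I would imitate the proof of Proposition \ref{feb28p2}.
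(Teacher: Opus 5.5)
The paper does not prove this theorem; it imports it from \cite[Theorem 3]{dsbconvex}. So your outline has to stand on its own, and its constructive core is missing. Your skeleton is Ringrose's argument with weak-* closures replaced by explicit slices, which is a reasonable plan, but everything hinges on the slices $S_{\varepsilon}$ being inhabited, and the route you propose for that cannot work.

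Put $T:x\rightsquigarrow(v_{1}(x),\ldots,v_{k}(x))$. If $T(B_{X})$ were located in $\mathbf{R}^{k}$ then, being bounded, it would be totally bounded. Hence $\sup_{x\in B_{X}}\sum_{j}\lambda_{j}v_{j}(x)$ would exist for every $\lambda\in\mathbf{R}^{k}$, and since $B_{X}$ is symmetric, every functional in the span of $v_{1},\ldots,v_{k}$ would be normable. Nothing gives you that:
\begin{itemize}
\item your $u_{n}$ are arbitrary elements of $B_{X^{\ast}}$ with $\phi(u_{n})\rightarrow1$, so ``a point where $u_{n}$ nearly attains its norm'' is not even meaningful unless $u_{n}$ is normable;
\item the $v$ adjoined in the last step is an arbitrary element of $X^{\ast}$;
\item a sum of normable functionals need not be normable.
\end{itemize}
Pliancy, which concerns norming elements of $X$, and the normability of $\phi$ do not help here. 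Already in $\ell_{2}$, which is complete, pliant and uniformly convex, take a binary sequence $(a_{n})$ with at most one term equal to $1$. Then $x\rightsquigarrow\sum_{n}a_{n}x_{n}$ is a bounded linear functional whose normability would decide whether $a_{n}=0$ for all $n$ or $a_{N}=1$ for some $N$.

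You could replace $T(B_{X})$ by the convex hull of finitely many of its points, which is compact, convex and located. That removes the locatedness problem but exposes the real one. To show that $(\phi(v_{j}))_{j}$ is close to such a hull, you need, for each $w=\sum_{j}\lambda_{j}v_{j}$ and each $\varepsilon>0$, an actual $y\in B_{X}$ with $w(y)>\phi(w)-\varepsilon$. Essentially, $\left\Vert \phi\right\Vert =1$ must be a bound for $\phi$ in the paper's sense, including at non-normable $w$. From $\left\Vert \phi\right\Vert =\sup_{u\in B_{X^{\ast}}}\left\vert \phi(u)\right\vert$ you get this when $w$ is normable. For general $w$, the direct argument yields only that ``$w(y)\leq t$ for all $y\in B_{X}$'' is false when $t<\phi(w)$, and that produces no $y$. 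This approximate-attainment property is the genuine constructive content of Goldstine's theorem here, and your proposal does not supply it. Without it, even your first step, choosing $x_{n}\in B_{X}$ with $u_{n}(x_{n})$ close to $1$, is unjustified.

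Granted inhabited slices, the rest is sound. The diameter estimate needs only that $1$ is a bound for $u_{n}$, exactly as in Proposition \ref{feb28p2}, and the Cauchy and identification steps then go through.

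Two smaller points:
\begin{itemize}
\item You have no guarantee that $\phi+\widehat{x_{0}}$ is normable. Your alternative does work: decide $\left\Vert \phi\right\Vert >0$ or $\left\Vert \phi\right\Vert <\varepsilon$, then use that $\widehat{X}$ is closed, which follows from completeness, Proposition \ref{feb13p2} and pliancy.
\item The normability of $x$ when $\widehat{x}=\phi$ is the converse of what your first paragraph proves, and it needs its own argument. Pliancy gives $\left\Vert x\right\Vert _{i}\leq\left\Vert \phi\right\Vert$ for all $i$. Some $u\in B_{X^{\ast}}$ satisfies $\left\vert u(x)\right\vert >\left\Vert \phi\right\Vert -\varepsilon$, and since $u$ has bound $1$ this yields an $i$ with $\left\Vert x\right\Vert _{i}>\left\Vert \phi\right\Vert -2\varepsilon$.
\end{itemize}
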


\section{Total boundedness}

Let $S$ \ be a subset of our q-normed space $\left(  X,\left(  \left\Vert
\ \right\Vert _{i}\right)  _{i\in I}\right)  $. Given $\varepsilon>0$, we call
a set $A\subset S$ an $\varepsilon$\textbf{-approximation} to $S$ if for each
$x\in S$ there exists $x^{\prime}\in A$ such that $\left\Vert x-x^{\prime
}\right\Vert _{i}<\varepsilon$ for all $i\in I$. We say that $S$ is

\begin{itemize}
\item \textbf{bounded }if there exists $r>0$ such that $\left\Vert
x\right\Vert _{i}<r$ for all $x\in S$ and all $i\in I$;

\item \textbf{totally bounded }if for each $\varepsilon>0$ there exists a
finitely enumerable $\varepsilon$-approximation to $S$.
\end{itemize}

%

\noindent
Note that totally bounded implies bounded.

\begin{proposition}
\label{feb28p3}If $S$ is a totally bounded subset of the q-normed space $X$,
and $f$ is a uniformly continuous mapping of $S$ into the q-normed space $Y$,
then $f(S)$ is totally bounded.
\end{proposition}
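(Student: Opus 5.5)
The plan is to take an $\varepsilon$-approximation to $f(S)$ as the image of a $\delta$-approximation to $S$, with $\delta$ a continuity associate. The one subtle point is that the definition of uniform continuity here is in contrapositive form, so it has to be converted into a positive estimate constructively.

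Fix $\varepsilon>0$. Let $\delta>0$ be a continuity associate for $\varepsilon/2$ and $f$. Thus if $x,x^{\prime}\in S$ and $\left\Vert f(x)-f(x^{\prime})\right\Vert _{j}>\varepsilon/2$ for some $j\in J$, then $\left\Vert x-x^{\prime}\right\Vert _{i}>\delta$ for some $i\in I$. By total boundedness of $S$, choose a finitely enumerable $\delta$-approximation $\{s_{1},\ldots,s_{n}\}$ to $S$. I then claim that $\{f(s_{1}),\ldots,f(s_{n})\}$, which is finitely enumerable and contained in $f(S)$, is an $\varepsilon$-approximation to $f(S)$.

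To verify this, take an element $f(x)$ of $f(S)$ with $x\in S$, and pick $k$ such that $\left\Vert x-s_{k}\right\Vert _{i}<\delta$ for all $i\in I$. Fix $j\in J$. The key step is to show $\left\Vert f(x)-f(s_{k})\right\Vert _{j}<\varepsilon$. By approximate splitting of reals, either $\left\Vert f(x)-f(s_{k})\right\Vert _{j}<\varepsilon$ or $\left\Vert f(x)-f(s_{k})\right\Vert _{j}>\varepsilon/2$. In the second case, uniform continuity yields some $i\in I$ with $\left\Vert x-s_{k}\right\Vert _{i}>\delta$. This contradicts the choice of $s_{k}$, so the second case is impossible. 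Hence the first case holds, for every $j\in J$, and $f(S)$ is totally bounded.

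The only point needing care is the main obstacle just mentioned: the contrapositive form of uniform continuity. Passing from ``not greater than $\varepsilon/2$'' to a strict bound must avoid any appeal to excluded middle. Using a continuity associate for $\varepsilon/2$ rather than $\varepsilon$, together with the constructive dichotomy $a<\varepsilon$ or $a>\varepsilon/2$, handles this.
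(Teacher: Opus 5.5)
Your proof is correct and essentially the same as the paper's: a continuity associate $\delta$ for $\varepsilon/2$, a finitely enumerable $\delta$-approximation to $S$, and its image under $f$ as the $\varepsilon$-approximation to $f(S)$. The one small difference is that the paper skips your approximate-splitting step. It derives $\left\Vert f(x)-f(x_{k})\right\Vert _{j}\leq\varepsilon/2<\varepsilon$ directly from the absurdity of $\left\Vert f(x)-f(x_{k})\right\Vert _{j}>\varepsilon/2$, since $\neg(a>b)$ already gives $a\leq b$ for real numbers constructively. Your dichotomy is therefore harmless but not needed.
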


\begin{proof}
Given $\varepsilon>0$, let $\delta$ be a continuity associate for
$\varepsilon/2$ and $f$. Let $\left\{  x_{1},\ldots,x_{N}\right\}  $ be a
finitely enumerable $\delta$-approximation to $S$. For each $x\in S$ there
exists $k$ such that $\left\Vert x-x_{k}\right\Vert _{i}<\delta$ for all $i\in
I$. If $\left\Vert f(x)-f(x_{k})\right\Vert _{j}>\varepsilon/2$ for some $j\in
J$, then $\left\Vert x-x_{k}\right\Vert _{i}>\delta$ for some $i\in I$, which
is absurd. Hence $\left\Vert f(x)-f(x_{k})\right\Vert _{j}\leq\varepsilon
/2<\varepsilon$ for all $j\in J$. It follows that $\left\{  f(x_{1}%
),\ldots,f(x_{N})\right\}  $ is a finitely enumerable $\varepsilon
$-approximation to $f(S)$.
\end{proof}

\begin{corollary}
\label{feb28c1}If $u$ is a bounded linear mapping of the q-normed space $X$
into the q-normed space $Y$, and $S\subset X$ is totally bounded, then $u(S) $
is totally bounded in $Y$.
\end{corollary}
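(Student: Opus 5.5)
The corollary is an immediate combination of Proposition \ref{feb13p1} and Proposition \ref{feb28p3}. Since $u$ is a bounded linear mapping of $X$ into $Y$, condition (iv) of Proposition \ref{feb13p1} holds. By the equivalence of (iv) and (iii) in that proposition, $u$ is uniformly continuous on $X$ with respect to the q-norms.

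The only point to check is that restricting $u$ to $S$ preserves uniform continuity in the q-norm sense. Fix $\varepsilon>0$ and let $\delta$ be a continuity associate for $\varepsilon$ and $u$ on $X$. Suppose $x,x^{\prime}\in S$ and $\left\Vert u(x)-u(x^{\prime})\right\Vert _{j}>\varepsilon$ for some $j\in J$. Since $x,x^{\prime}$ also lie in $X$, there is some $i\in I$ with $\left\Vert x-x^{\prime}\right\Vert _{i}>\delta$. So the same $\delta$ serves as a continuity associate for $u|_{S}$.

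We may now apply Proposition \ref{feb28p3} to the totally bounded set $S$ and the uniformly continuous map $u|_{S}:S\rightarrow Y$. It gives that $u(S)$ is totally bounded in $Y$.

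There is essentially no obstacle here. The one substantive ingredient is the implication from boundedness to uniform continuity, which Proposition \ref{feb13p1} takes from \cite[Chap. 7, Prop. 5.6]{BB}, and we take it as given.
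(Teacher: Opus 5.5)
Your proposal is correct and follows the paper's proof: the paper likewise gets uniform continuity of $u$ on $X$, and hence on $S$, from \cite[Chapter 7, (5.6)]{BB} (the same result behind (iii)$\Leftrightarrow$(iv) in Proposition \ref{feb13p1}), and then applies Proposition \ref{feb28p3}. Your explicit check that the restriction to $S$ keeps the same continuity associate just spells out the paper's phrase ``and hence on $S$''.
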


\begin{proof}
By \cite[Chapter 7, (5.6)]{BB}, $u$ is uniformly continuous on $X$ and hence
on $S$. It remains to apply Proposition \ref{feb28p3}.
\end{proof}

\begin{corollary}
\label{feb28c2}If $S$ is a totally bounded subset of a q-normed space $X$, and
$f$ is a uniformly continuous mapping of $S$ into $\mathbf{R}$, then
$\sup_{x\in S}f(x)$ and $\inf_{x\in S}f(x)$ exist.
\end{corollary}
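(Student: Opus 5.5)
The plan is to reduce to the classical constructive fact that a totally bounded subset of $\mathbf{R}$ has a supremum and an infimum. This is the constructive least-upper-bound principle \cite[2.1.18]{BVtech} applied to a totally bounded (hence bounded and located) subset of $\mathbf{R}$.

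First I regard $\mathbf{R}$ as a q-normed space with the single seminorm $\left\vert \ \right\vert$, so that $I=\{1\}$. The definitions of uniform continuity and total boundedness given above then reduce to the usual ones for $\mathbf{R}$. Applying Proposition \ref{feb28p3} with $Y=\mathbf{R}$ shows that $f(S)$ is a totally bounded subset of $\mathbf{R}$.

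It then remains to check that a totally bounded subset $T=f(S)$ of $\mathbf{R}$ has a supremum, and I will spell this out directly. Given $\alpha<\beta$, set $\varepsilon=\frac{1}{2}(\beta-\alpha)$ and take a finitely enumerable $\varepsilon$-approximation $\{t_1,\ldots,t_N\}\subset T$. The finite maximum $m=\max_k t_k$ exists, and either $m>\alpha+\varepsilon/2$ or $m<\beta-\varepsilon$.
\begin{itemize}
\item[] In the first case some $t_k>\alpha$.
\item[] In the second case each $t\in T$ lies within $\varepsilon$ of some $t_k\le m$, so $t<m+\varepsilon<\beta$.
\end{itemize}
Thus for all $\alpha<\beta$, either some element of $T$ exceeds $\alpha$ or $\beta$ is an upper bound of $T$. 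The set $T$ is also bounded and inhabited, provided $S$ is inhabited, which is implicitly assumed for sup and inf to make sense. The least-upper-bound principle therefore gives $\sup T=\sup_{x\in S}f(x)$. The infimum follows by applying the same argument to $-f$, which is also uniformly continuous, since $\inf_{x \in S} f(x)=-\sup_{x\in S}(-f)(x)$.

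The only step needing any care is the transfer of total boundedness through $f$. That is exactly Proposition \ref{feb28p3}, so I expect no real obstacle. The point to watch is that the $\varepsilon$-approximation produced lies inside $f(S)$, which that proposition guarantees; this matters because the witness "some element of $T$ exceeds $\alpha$" must be an actual value $f(x_k)$.
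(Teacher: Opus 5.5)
Your proof is correct and follows essentially the same route as the paper, which likewise applies Proposition~\ref{feb28p3} with $Y=\mathbf{R}$ to make $f(S)$ totally bounded and then cites \cite[Proposition 2.2.5]{BVtech} for the existence of the supremum and infimum of a totally bounded subset of $\mathbf{R}$. The only difference is that you prove that cited fact inline via the least-upper-bound principle (with $-f$ for the infimum) instead of citing it, and that inline argument is sound.
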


\begin{proof}
This follows from Proposition \ref{feb28p3} and \cite[Proposition
2.2.5]{BVtech}.
\end{proof}

\begin{proposition}
\label{mar0326}The following are equivalent conditions on a q-normed space
$X.$

\begin{enumerate}
\item[\emph{(i)}] Every element of $X$ is normable.

\item[\emph{(ii)}] Every totally bounded subset of $X$ is located.
\end{enumerate}
\end{proposition}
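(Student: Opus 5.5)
The direction (ii) $\Rightarrow$ (i) is immediate: the singleton $\left\{ 0\right\}$ is finitely enumerable and hence totally bounded, so by (ii) it is located in $X$. By Proposition \ref{mar03p2} ((i) $\Rightarrow$ (ii) there), every element of $X$ is then normable.

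For (i) $\Rightarrow$ (ii), assume every element of $X$ is normable, so that $X$ is a normed space under $\left\Vert x\right\Vert =\sup_{i\in I}\left\Vert x\right\Vert _{i}$. Let $S$ be totally bounded and fix $x\in X$. The plan is to show that
\[
d(x,S)\equiv\inf\left\{ \left\Vert x-s\right\Vert :s\in S\right\}
\]
exists, and then to translate this back into the q-normed definition of locatedness.

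First observe that the map $f:s\rightsquigarrow\left\Vert x-s\right\Vert$ from $S$ into $\mathbf{R}$ is uniformly continuous with respect to the q-norm. Given $\varepsilon>0$, take $\delta=\varepsilon$. If $\left\vert f(s)-f(s^{\prime})\right\vert >\varepsilon$, then $\left\Vert s-s^{\prime}\right\Vert \geq\left\vert \left\Vert x-s\right\Vert -\left\Vert x-s^{\prime}\right\Vert \right\vert >\varepsilon$. Since $\left\Vert s-s^{\prime}\right\Vert$ is a supremum over $i\in I$, there is some $i$ with $\left\Vert s-s^{\prime}\right\Vert _{i}>\varepsilon$, as the definition of uniform continuity requires. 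Corollary \ref{feb28c2} then shows that $d\equiv\inf_{s\in S}f(s)$ exists. (Alternatively, one could argue directly: given a finitely enumerable $\varepsilon$-approximation $\left\{ s_{1},\ldots,s_{N}\right\}$, the numbers $\min_{k}\left\Vert x-s_{k}\right\Vert$ form a Cauchy net approximating the infimum.)

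Next, given $\alpha<\beta$, either $d>\alpha$ or $d<\beta$.

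\begin{itemize}
\item If $d>\alpha$, then for each $s\in S$ we have $\left\Vert x-s\right\Vert >\alpha$. The definition of supremum gives $i\in I$ with $\left\Vert x-s\right\Vert _{i}>\alpha$, which is the first alternative in the definition of located.
\item If $d<\beta$, there exists $s\in S$ with $\left\Vert x-s\right\Vert <\beta$. Hence $\left\Vert x-s\right\Vert _{i}<\beta$ for all $i\in I$, which is the second alternative.
\end{itemize}

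Thus $S$ is located.

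The only delicate point is the translation between sup-norm statements and the pointwise-in-$i$ conditions in the definitions of uniform continuity and locatedness. Here the strict inequality $\left\Vert y\right\Vert >\alpha$ is what yields an index $i$ with $\left\Vert y\right\Vert _{i}>\alpha$, using the constructive meaning of supremum. I expect no further obstacle.
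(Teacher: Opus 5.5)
Your proof is correct. Your (ii) $\Rightarrow$ (i) is exactly the paper's argument, but your (i) $\Rightarrow$ (ii) takes a different route.

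You treat $X$ as a normed space under $\sup_{i\in I}\left\Vert \ \right\Vert_{i}$, check that $s\rightsquigarrow\left\Vert x-s\right\Vert$ is q-norm uniformly continuous, and obtain $\inf_{s\in S}\left\Vert x-s\right\Vert$ from Corollary \ref{feb28c2}, which rests on Proposition \ref{feb28p3} and the existence of infima of totally bounded subsets of $\mathbf{R}$. You then unpack that infimum into the two alternatives.

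The paper never forms that infimum. It takes a finitely enumerable $\frac{1}{2}(\beta-\alpha)$-approximation $\left\{x_{1},\ldots,x_{n}\right\}\subset K$ and applies the locatedness of finitely enumerable sets (Proposition \ref{mar03p2}) to the pair $\frac{1}{2}(\alpha+\beta)<\beta$. In the second alternative it takes $s=x_k\in K$. In the first alternative it passes from the $x_{k}$ to an arbitrary $y\in K$ by the triangle inequality for the single seminorm $\left\Vert \ \right\Vert_{i_{k}}$.

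The paper's argument is more elementary and stays at the level of individual seminorms, with no uniform-continuity verification. It also covers an empty $K$ vacuously. If empty totally bounded sets are admitted, your route needs a trivial preliminary case split, since the empty set has no infimum.

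Your route is the standard normed-space argument, and it gives slightly more. The number $\inf_{s\in S}\sup_{i\in I}\left\Vert x-s\right\Vert_{i}$ actually exists, so $S$ is located in the sense of the paper's ``first translation'' and not merely in the unpacked sense.
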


\begin{proof}
Suppose that (i) holds, and let $K\subset X$ be totally bounded. Given
$\alpha,\beta\in\mathbf{R}$ with $\alpha<\beta$, let $S=\left\{  x_{1}%
,\ldots.x_{n}\right\}  $ be a finitely enumerable $\frac{1}{2}(\beta-\alpha
)$-approximation to $K$. For each $x\in X$, since $S$\ is located in $X$ (by
Proposition \ref{mar03p2}), either for each $k\leq n\ $there exists $i_{k}$
such that $\left\Vert x-x_{k}\right\Vert _{i_{k}}>\frac{1}{2}(\alpha+\beta)$
or else there exists $k\leq n$ such that $\left\Vert x-x_{k}\right\Vert
_{i}<\beta$ for all $i\in I$. In the former case, for each $y\in K$, choosing
$k\leq n$ such that $\left\Vert y-x_{k}\right\Vert _{i}<\frac{1}{2}%
(\beta-\alpha)$ for all $i\in I$, we have%
\[
\left\Vert x-y\right\Vert _{i_{k}}\geq\left\Vert x-x_{k}\right\Vert _{i_{k}%
}-\left\Vert y-x_{k}\right\Vert _{i_{k}}>\tfrac{1}{2}(\alpha+\beta)-\tfrac
{1}{2}(\beta-\alpha)=\alpha.
\]
Since $\alpha,\beta$, and $x$ are arbitrary, it follows that that $K$ is
located in $X$. Thus (i)\ $\Rightarrow\ $(ii). Conversely, if (ii) holds, then
every singleton subset of $X$ is located; whence (i) holds, by Proposition
\ref{mar03p2}.
\end{proof}

\begin{proposition}
\label{feb28p4}Let $K$ be a totally bounded subset of the q-normed space $X$,
and let $S\subset K$ be located in $K$. Then $S$ is totally bounded.
\end{proposition}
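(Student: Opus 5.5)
We adapt the standard argument from the constructive theory of metric spaces (cf. \cite[2.2.6]{BVtech}). Here ``located in $K$'' means the locatedness definition applied with the test points $x$ ranging over $K$ rather than $X$. We must also allow for the possibility that $S$ is empty, which is the usual constructive caveat. Given $\varepsilon>0$, we first take a finitely enumerable $\varepsilon/3$-approximation $\{x_{1},\ldots,x_{n}\}$ to $K$.

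For each $k\leq n$ we apply the locatedness of $S$ in $K$ to the point $x_{k}\in K$ with $\alpha=\varepsilon/3$ and $\beta=2\varepsilon/3$. This gives one of two alternatives:
\begin{itemize}
\item[(a)] for each $s\in S$ there is $i\in I$ with $\left\Vert x_{k}-s\right\Vert_{i}>\varepsilon/3$;
\item[(b)] there is $s_{k}\in S$ with $\left\Vert x_{k}-s_{k}\right\Vert_{i}<2\varepsilon/3$ for all $i\in I$.
\end{itemize}
Let $P$ be the finite set of indices $k$ for which (b) was chosen, with the corresponding witnesses $s_{k}$. We claim that $A=\{s_{k}:k\in P\}$ is an $\varepsilon$-approximation to $S$.

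To verify this, take $s\in S$. Since $s\in K$, there is $k$ with $\left\Vert s-x_{k}\right\Vert_{i}<\varepsilon/3$ for all $i\in I$. This $k$ cannot be one for which (a) was chosen, because (a) would give some $i$ with $\left\Vert x_{k}-s\right\Vert_{i}>\varepsilon/3$, a contradiction. The decision for each $k$ was made explicitly, so we know outright that $k\in P$. The triangle inequality for each seminorm then gives
\[
\left\Vert s-s_{k}\right\Vert_{i}\leq\left\Vert s-x_{k}\right\Vert_{i}+\left\Vert x_{k}-s_{k}\right\Vert_{i}<\varepsilon/3+2\varepsilon/3=\varepsilon
\]
for every $i\in I$. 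Since $\varepsilon$ is arbitrary, $S$ is totally bounded.

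The main subtlety is constructive rather than computational. The bound $<\varepsilon/3$ holds uniformly in $i$, while (a) supplies only one particular $i$; we must check that these genuinely conflict, and they do, because the uniform bound applies in particular to that $i$. A second point is that $A$ may be empty. The argument yields a finitely enumerable approximation whenever $P$ is nonempty, and when $P$ is empty it shows that $S$ has no elements. We read the statement in the usual constructive sense that includes this possibility, or, if the convention demands inhabited sets, we assume $S$ is inhabited, which forces $P$ to be nonempty.
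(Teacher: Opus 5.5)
Your proof is correct and is essentially the paper's argument: the same $\varepsilon/3$-approximation to $K$, locatedness of $S$ applied at each $x_k\in K$ with thresholds $\varepsilon/3$ and $2\varepsilon/3$, the same splitting of indices into a ``witness'' part $P$ and an excluded part, and the same triangle-inequality estimate. Your remarks on making the choice explicit for each $k$ (so that $k\in P$ is known outright) and on the possibly empty case are sound points that the paper leaves implicit.
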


\begin{proof}
Given $\varepsilon>0$, construct a finitely enumerable $\varepsilon
/3$-approximation $\left\{  x_{1},\ldots,x_{N}\right\}  $ to $K$. Write
$\left\{  1,\ldots,N\right\}  $ as a union of two sets $P$ and $Q$, where

\begin{itemize}
\item[--] if $k\in P$, then there exists $s_{k}\in S$ such that $\left\Vert
x_{k}-s_{k}\right\Vert _{i}<2\varepsilon/3$ for all $i\in I$, and

\item[--] if $k\in Q$, then for each $s\in S$ there exists $i\in I$ such that
$\left\Vert x_{k}-s\right\Vert _{i}>\varepsilon/3$.
\end{itemize}

%

\noindent
Given $s\in S$, choose $k$ such that $\left\Vert s-x_{k}\right\Vert
_{i}<\varepsilon/3$ for all $i\in I$. Then $k\notin Q$, so $k\in P$ and
therefore%
\[
\left\Vert s-s_{k}\right\Vert _{i}\leq\left\Vert s-x_{k}\right\Vert
_{i}+\left\Vert x_{k}-s_{k}\right\Vert _{i}<\frac{\varepsilon}{3}%
+\frac{2\varepsilon}{3}=\varepsilon
\]
for all $i\in I$. Thus $\left\{  s_{k}:k\in P\right\}  $ is a finitely
enumerable $\varepsilon$-approximation to $S$.
\end{proof}

\section{Finite-dimensional q-normed spaces}

First place on record this theorem from \cite[Theorem 20.5]{OLG48}.

\begin{theorem}
\label{mar16t1}Every $n$-dimensional locally convex space over $\mathbf{R}$ is
homeomorphic to $\mathbf{R}^{n}$.
\end{theorem}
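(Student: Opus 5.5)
Let $X$ be an $n$-dimensional locally convex space with defining seminorms $(p_i)_{i\in I}$ and basis $e_1,\ldots,e_n$. My approach is to take the coordinate map $T:\mathbf{R}^n\rightarrow X$, $T(\lambda_1,\ldots,\lambda_n)=\sum_k\lambda_k e_k$, which is a linear bijection, and show that $T$ and $T^{-1}$ are both continuous.

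Continuity of $T$ is the easy direction and I would do it first. For each $i$ we have $p_i(Tx)\le\sum_k|\lambda_k|p_i(e_k)$. A basic neighbourhood of $0$ in $X$ is determined by finitely many seminorms $i_1,\ldots,i_m$ and some $\varepsilon>0$. Let $M=\sum_{r,k}p_{i_r}(e_k)+1$. Then $\|\lambda\|_1<\varepsilon/M$ forces $Tx$ into that neighbourhood.

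For $T^{-1}$ it suffices to show that each coordinate functional $\lambda_k$ is continuous on $X$. I would argue by induction on $n$ in the standard constructive way, as in Bishop's treatment of finite-dimensional normed spaces. In a locally convex space, \emph{independent} means that the vectors are linearly independent in the strong constructive sense: each $e_k$ is bounded away from the span of the others. This gives some finite subfamily $F$ of seminorms and some $c>0$ such that $\sum_{i\in F}p_i(e_k-y)\ge c$ for every $y$ in $\mathrm{span}\{e_j:j\neq k\}$.

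The induction runs as follows.
\begin{itemize}
\item By the induction hypothesis the subspace $V_k=\mathrm{span}\{e_j:j\ne k\}$ is homeomorphic to $\mathbf{R}^{n-1}$. It is therefore complete and, for the finite-seminorm gauge $q_F=\sum_{i\in F}p_i$, located, since $q_F$-distances to $V_k$ reduce to infima of continuous functions over bounded subsets of $\mathbf{R}^{n-1}$.
\item Let $d=\inf_{y\in V_k}q_F(e_k-y)\ge c>0$. Then $q_F(x)\ge|\lambda_k|d$.
\item This inequality gives continuity of $\lambda_k$ with respect to the locally convex structure.
\item Summing over $k$ gives $\|\lambda\|_1\le C\sum_{i\in F'}p_i(x)$ for a finite $F'$, so $T^{-1}$ is continuous.
\end{itemize}

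The main obstacle is the meaning of ``$n$-dimensional'' constructively. The hypothesis must supply a \emph{uniform} lower bound $c>0$ expressed through a single finite family of seminorms, not merely $e_k\notin V_k$ pointwise. I would first try to extract this from the definition in \cite{OLG48}, since such a bound is what makes the infimum $d$ positive and available without any compactness appeal. Failing that, I would try to extract it from a Heine--Borel style argument on the unit sphere of $\mathbf{R}^n$, which is totally bounded. There the continuous function $\lambda\mapsto q_F(T\lambda)$ attains a positive infimum by Corollary~\ref{feb28c2}, once positivity at each point is made uniform via the finite seminorm family.
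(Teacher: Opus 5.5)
The paper does not prove this result; it quotes it from \cite[Theorem 20.5]{OLG48}. Judged on its own, your proposal has a genuine gap. The step you flag as ``the main obstacle'' is the entire content of the theorem, and neither of your two ways of handling it works.

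Under the usual constructive notion of a basis, independence of $e_1,\ldots,e_n$ says only that $\sum_k\lambda_ke_k\neq0$ whenever $\lambda\neq0$. That is, for each such $\lambda$ there is \emph{some} index $i$, depending on $\lambda$, with $p_i(\sum_k\lambda_ke_k)>0$. A single finite $F$ and $c>0$ with $\sum_{i\in F}p_i(e_k-y)\geq c$ for all $y\in V_k$ is already equivalent to continuity of $T^{-1}$. Once you have it, $q_F(x)\geq c|\lambda_k|$ follows in one line, and your induction, locatedness and infimum $d$ do no work.

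The Heine--Borel fallback is not available in \textbf{BISH}. Extracting a finite family of seminorms, and a positive lower bound, from pointwise positivity on the unit sphere is a fan-theorem argument. Corollary~\ref{feb28c2} gives only the \emph{existence} of the infimum of $\lambda\mapsto q_F(T\lambda)$, not its positivity, and it presupposes the very $F$ you are looking for. Note also that $V_k$ is $q_F$-located only if $q_F$ dominates the Euclidean norm on $V_k$. So $F$ must contain the seminorms supplied by the induction hypothesis.

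The missing idea is an inductive step that adds one seminorm at a time, using completeness in place of compactness.
\begin{itemize}
\item Let $F$ be finite with $q_F=\sum_{i\in F}p_i$ equivalent to the Euclidean norm on $V=\mathrm{span}\{e_1,\ldots,e_{n-1}\}$. Then $V$ is $q_F$-complete and $d_F=\inf_{v\in V}q_F(e_n-v)$ exists, though you cannot decide whether $d_F>0$.
\item Take an increasing binary sequence with $a_k=0\Rightarrow d_F<2^{-k}$ and $a_k=1\Rightarrow d_F>2^{-k-1}$. Pick $v_k\in V$ with $q_F(e_n-v_k)<2^{-k}$ while $a_k=0$, and freeze $v_k$ once $a_k=1$.
\item This sequence is $q_F$-Cauchy, so $v_k\to y\in V$. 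Independence now yields a \emph{single} index $i_0$ with $r=p_{i_0}(e_n-y)>0$.
\item Choose $\eta>0$ such that $p_{i_0}(w)<r/2$ whenever $w\in V$ and $q_F(w)<\eta$, and choose $K$ with $2^{-K}<\eta/8$.
\item If $a_K=1$, then $d_F>0$ outright. If $a_K=0$, then $q_F(e_n-y)\leq2^{-K}$, and each $v\in V$ satisfies one of two cases. Either $q_F(v-y)<\eta$, whence $p_{i_0}(e_n-v)>r/2$; or $q_F(v-y)>\eta/2$, whence $q_F(e_n-v)>\eta/4$.
\end{itemize}
So $G=F\cup\{i_0\}$ gives $\inf_{v\in V}q_G(e_n-v)>0$. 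Combined with the induction hypothesis on $V$, this bounds $|\lambda|$ by a multiple of $q_G(\sum_k\lambda_ke_k)$, which is the continuity of $T^{-1}$ you need.
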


%

\noindent
In fact, the proof of \cite[Theorem 20.5]{OLG48} shows that if $\left(
X,\left(  p_{i}\right)  _{i\in I}\right)  $ is an $n$-dimensional real locally
convex space with basis $\left\{  b_{1},\ldots,b_{n}\right\}  $, then the
\textbf{canonical linear bijection}%
\begin{equation}
f_{X}:\sum_{i=1}^{n}\lambda_{i}b_{i}\rightsquigarrow\left(  \lambda_{1}%
,\ldots,\lambda_{n}\right)  \label{a12}%
\end{equation}
of $X$ onto $\mathbf{R}^{n}$ is uniformly continuous, and has uniformly
continuous inverse, relative to the locally convex structures on $X$ and
$\mathbf{R}^{n}$.\label{0000804}

\begin{corollary}
\label{mar31c1}Let $\left(  X,\left(  \left\Vert \ \right\Vert _{i}\right)
_{i\in I}\right)  $ be a $n$-dimensional real q-normed with basis $\left\{
b_{1},\ldots,b_{n}\right\}  $. Then $f_{X}\ $is bi-uniformly continuous
relative to the q-norms on the two spaces.
\end{corollary}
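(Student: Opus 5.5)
We need $f_X$ and $f_X^{-1}$ uniformly continuous with respect to the q-norms on $X$ and on $\mathbf{R}^n$. The plan is to take the uniform continuity supplied by the remark after Theorem \ref{mar16t1}, which is relative to the locally convex structures, and convert it using Proposition \ref{apr08p1} in one direction and a direct estimate in the other.

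For $f_X$ itself, the target $\mathbf{R}^n$ is a normed space, say with the Euclidean norm, regarded as a q-normed space with $I=\{1\}$. The remark after Theorem \ref{mar16t1} says that $f_X$ is uniformly continuous on $X$ relative to the locally convex structures. Proposition \ref{apr08p1} then applies directly, with $S=X$ and $Y=\mathbf{R}^n$, and shows that $f_X$ is uniformly continuous with respect to the q-norms.

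For $f_X^{-1}$, the codomain is now $X$, which need not be normed, so Proposition \ref{apr08p1} does not apply. I would argue directly and avoid the locally convex detour altogether. By condition (*), for each $k$ there is $c_k>0$ with $\Vert b_k\Vert_i\le c_k$ for all $i\in I$. Let $c=\sum_k c_k$. For $\lambda=(\lambda_1,\ldots,\lambda_n)$ and $\mu=(\mu_1,\ldots,\mu_n)$ in $\mathbf{R}^n$, and each $i\in I$,
\[
\Vert f_X^{-1}(\lambda)-f_X^{-1}(\mu)\Vert_i\le\sum_{k=1}^n|\lambda_k-\mu_k|\,\Vert b_k\Vert_i\le c\,\Vert\lambda-\mu\Vert .
\]
Given $\varepsilon>0$, take $\delta=\varepsilon/c$. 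If $\Vert f_X^{-1}(\lambda)-f_X^{-1}(\mu)\Vert_i>\varepsilon$ for some $i$, then $c\Vert\lambda-\mu\Vert>\varepsilon$, so $\Vert\lambda-\mu\Vert>\delta$. This is exactly the q-norm definition of uniform continuity, and the bound is uniform in $i$, as that definition requires. Since $f_X$ is injective, the two parts together give bi-uniform continuity.

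The main obstacle is making sure Proposition \ref{apr08p1} really delivers the q-norm notion for $f_X$, in which the quantifier ``for some $i\in I$'' ranges over the whole, possibly infinite, index set. That proposition's proof (like that of Proposition \ref{mar31p1}) handles this by pigeonholing over the finitely enumerable set $F$ of seminorms in the locally convex continuity condition, so it applies here without change.
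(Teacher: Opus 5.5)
Your proof is correct, and it is more careful than the paper's own. The paper disposes of the corollary in one line: since $f_X$ is bi-uniformly continuous in the locally convex sense, the result ``follows from Proposition \ref{mar31p1}''. That proposition concerns continuity at a point, and it requires the codomain to be normed.

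For $f_X$ itself the right tool is the uniform analogue, Proposition \ref{apr08p1}. That is what you use, so on this half you and the paper agree, with your citation being the accurate one.

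For $f_X^{-1}$ the paper's citation does not apply, because the codomain is now $X$. Locally convex uniform continuity of $f_X^{-1}$ gives, for each $\varepsilon$ and each finitely enumerable set of seminorms, a $\delta$ that may depend on that set. The q-norm definition instead demands a single $\delta$ that works for every $i\in I$. Your Lipschitz estimate $\Vert f_X^{-1}(\lambda)-f_X^{-1}(\mu)\Vert_i\le c\,\Vert\lambda-\mu\Vert$, with $c$ obtained by applying condition (*) to the basis vectors, supplies exactly that uniformity. It needs neither Theorem \ref{mar16t1} nor any conversion proposition.

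The paper's route buys brevity by treating both directions as instances of one citation, but that leaves the inverse direction unjustified. Your route makes clear where the content lies. The forward direction, which amounts to uniform continuity of the coordinate map, genuinely rests on Theorem \ref{mar16t1}. The inverse direction is elementary once (*) is used.
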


\begin{proof}
Since $f_{X}$ is bi-uniformly continuous in the locally-convex-space sense,
the result follows from Proposition \ref{mar31p1}.
\end{proof}

\begin{proposition}
\label{mar31p2}Let $\left(  X,\left(  \left\Vert \ \right\Vert _{i}\right)
_{i\in I}\right)  $ be a $n$-dimensional real q-normed with basis $\left\{
b_{1},\ldots,b_{n}\right\}  $. Then for each $j\leq n$ the $j^{\mathbf{th}}%
$\textbf{\ coordinate functional}%
\[
u_{j}:\sum_{k=1}^{n}\lambda_{k}b_{k}\rightsquigarrow\lambda_{j}%
\ \ (\mathbf{\lambda}\in\mathbf{R}^{n})
\]
is q-norm uniformly continuous on $X$.
\end{proposition}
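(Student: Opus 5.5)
The plan is to write $u_j$ as the composition of the canonical linear bijection $f_X$ of (\ref{a12}) with the $j^{\mathrm{th}}$ coordinate projection $\pi_j:\mathbf{R}^n\rightarrow\mathbf{R}$, $(\lambda_1,\ldots,\lambda_n)\rightsquigarrow\lambda_j$. Indeed, for $x=\sum_{k=1}^n\lambda_k b_k$ we have $u_j(x)=\lambda_j=\pi_j(f_X(x))$. By Corollary \ref{mar31c1}, $f_X$ is q-norm uniformly continuous from $X$ onto $\mathbf{R}^n$. Here $\mathbf{R}^n$ carries its q-norm as a normed space, with the single seminorm given by the Euclidean (or any equivalent) norm. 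The projection $\pi_j$ satisfies $|\pi_j(\lambda)-\pi_j(\mu)|\leq\|\lambda-\mu\|$, so it is uniformly continuous with continuity associate $\delta=\varepsilon$.

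The key steps, in order, are as follows.
\begin{enumerate}
\item[(1)] Record the identity $u_j=\pi_j\circ f_X$.
\item[(2)] Check that a composition of q-norm uniformly continuous maps is q-norm uniformly continuous. Given $\varepsilon>0$, let $\delta_1$ be a continuity associate for $\varepsilon$ and $\pi_j$, and let $\delta$ be a continuity associate for $\delta_1$ and $f_X$. If $x,x'\in X$ and $|u_j(x)-u_j(x')|>\varepsilon$, then $\|f_X(x)-f_X(x')\|>\delta_1$, and hence $\|x-x'\|_i>\delta$ for some $i\in I$.
\item[(3)] Conclude that $u_j$ is q-norm uniformly continuous.
\end{enumerate}
An alternative route is to observe that $u_j$ is linear. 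In that case continuity at $0$ suffices, by Proposition \ref{feb13p1}, and that continuity is inherited in the same way.

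I do not expect a serious obstacle. The only point needing care is the one already handled by Corollary \ref{mar31c1}, namely the passage from the locally convex uniform continuity of $f_X$ supplied by \cite[Theorem 20.5]{OLG48} to q-norm uniform continuity. That passage rests on Proposition \ref{apr08p1} and on $\mathbf{R}^n$ being normed. If one prefers to avoid the composition lemma, the fallback is to apply Proposition \ref{apr08p1} directly to $u_j$. The map $u_j=\pi_j\circ f_X$ is uniformly continuous in the locally convex sense, since $\pi_j$ is plainly continuous for the locally convex structure on $\mathbf{R}^n$. As $\mathbf{R}$ is a normed space, Proposition \ref{apr08p1} then gives q-norm uniform continuity at once.
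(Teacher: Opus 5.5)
Your proposal is correct and is the paper's own argument: the paper proves this by writing $u_j$ as the composition of the q-norm uniformly continuous maps $f_X$ on $X$ and $\lambda\rightsquigarrow\lambda_j$ on $\mathbf{R}^n$. Your step (2) explicitly checks that such a composition is again q-norm uniformly continuous, which the paper leaves implicit.
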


\begin{proof}
The function $u_{j}$ is the composition of the q-norm uniformly continuous
mappings $f_{X}$ on $X\ $and $\mathbf{\lambda}\rightsquigarrow\lambda_{j}$ on
$\mathbf{R}^{n})$.
\end{proof}

%

\medskip

We say that a q-normed space $\left(  X,\left(  \left\Vert \ \right\Vert
_{i}\right)  _{i\in I}\right)  $ is

\begin{itemize}
\item[--] \textbf{locally totally bounded }if every bounded subset of $X$ is
contained in some totally bounded set, and

\item[--] \textbf{locally compact} if every bounded subset of $X$ is contained
in some \textbf{compact}---that is, totally bounded and q-complete---set.
\end{itemize}

\begin{proposition}
\label{mar31p3}Every finite-dimensional q-normed space is locally compact.
\end{proposition}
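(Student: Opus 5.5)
The plan is to transport local compactness from $\mathbf{R}^{n}$ to $X$ along the canonical linear bijection $f_{X}$ of (\ref{a12}). Let $X$ have basis $\left\{ b_{1},\ldots,b_{n}\right\}$. By Corollary \ref{mar31c1}, $f_{X}$ is bi-uniformly continuous relative to the q-norms, where $\mathbf{R}^{n}$ carries its Euclidean norm, regarded as a q-normed space with a single seminorm. Let $S\subset X$ be bounded, say $\left\Vert x\right\Vert _{i}<r$ for all $x\in S$ and all $i\in I$.

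The first step is to show that $f_{X}(S)$ is bounded in $\mathbf{R}^{n}$. The coordinate functionals $u_{j}$ are uniformly continuous on $X$ by Proposition \ref{mar31p2}, and they are linear, so by Proposition \ref{feb13p1} each $u_{j}$ is bounded with some bound $c_{j}>0$. Fix $x\in S$ and $\varepsilon>0$. Boundedness gives some $i\in I$ with $c_{j}\left\Vert x\right\Vert _{i}>\left\vert u_{j}(x)\right\vert -\varepsilon$. Hence $\left\vert u_{j}(x)\right\vert <c_{j}r+\varepsilon$ for every $\varepsilon>0$, so $\left\vert u_{j}(x)\right\vert \leq c_{j}r$. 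It follows that $f_{X}(S)$ lies in the closed cube $C\equiv\prod_{j=1}^{n}[-c_{j}r,c_{j}r]$, and $C$ is compact, that is, totally bounded and complete, in $\mathbf{R}^{n}$.

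The second step is to pull $C$ back: set $K\equiv f_{X}^{-1}(C)$, so that $S\subset K$. Since $f_{X}^{-1}$ is uniformly continuous on $\mathbf{R}^{n}$ with respect to the q-norms, Proposition \ref{feb28p3} shows that $K$ is totally bounded. For completeness, let $(x_{m})_{m\geq1}$ be a Cauchy sequence in $K$. Then $(f_{X}(x_{m}))_{m\geq1}$ is Cauchy in $C$, so it converges to some $\lambda\in C$. By continuity of $f_{X}^{-1}$, $x_{m}\rightarrow f_{X}^{-1}(\lambda)\in K$. Equivalently, one can apply the stated fact that a bi-uniformly continuous injection carries complete sets to complete sets, here to $f_{X}^{-1}$ on $C$. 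Thus $K$ is compact and contains $S$, which proves the proposition.

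The main point needing care is the first step. Elements of $S$ need not be normable, so boundedness of the coordinates must be extracted through the $\varepsilon$-form of the definition of a bounded linear map, as above, rather than through any norm on $X$. If one prefers to avoid Proposition \ref{feb13p1}, the same bound follows directly from a continuity associate $\delta$ for $\varepsilon=1$ and $u_{j}$, using linearity and scaling. The only other thing to check is that Corollary \ref{mar31c1} applies to $\mathbf{R}^{n}$ with its Euclidean norm, which holds because Proposition \ref{mar31p1} takes a normed space as its target.
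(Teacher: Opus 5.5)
Your proof is correct and follows essentially the same route as the paper's. You show that $f_X(S)$ is bounded in $\mathbf{R}^n$ via boundedness of a linear map, then pull a compact set back along $f_X^{-1}$ using Proposition \ref{feb28p3} and bi-uniform continuity. The only difference is in the first step: the paper uses a single bound $c$ for $f_X$ itself with $\varepsilon=1$, getting $|f_X(x)|<cr+1$ and then citing local compactness of $\mathbf{R}^n$, whereas you bound each coordinate functional $u_j$ and land in an explicit cube.

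One small correction to your closing remark. For $f_X^{-1}$ the target $X$ is not a normed space, so Proposition \ref{mar31p1} does not justify its uniform continuity. That uniform continuity is nevertheless immediate from (*), since $\|\sum_{k}\lambda_k b_k\|_i\le\sum_k|\lambda_k|M_k$ for all $i\in I$, where $M_k$ is a bound for $\{\|b_k\|_i : i\in I\}$.
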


\begin{proof}
Let $\left(  X,\left(  \left\Vert \ \right\Vert _{i}\right)  _{i\in I}\right)
$ be an $n$-dimensional q-normed space, with basis $\left\{  b_{1}%
,\ldots,b_{n}\right\}  $, let $f_{X}$ be as at (\ref{a12}), and let $B$ be a
bounded subset of $X$. There exists $r>0$ such that $\left\Vert x\right\Vert
_{i}<r$ for all $x\in B$ and all $i\in I$. Also, there exists $c>0$ with the
property: for each $x\in X$ there exists $i\in I$ such that $c\left\Vert
x\right\Vert _{i}>\left\Vert f_{X}(x)\right\Vert -1$. Thus for each $x\in B$
there exists $i\in I$ such that $\left\vert f_{X}(x)\right\vert <c\left\Vert
x\right\Vert _{i}+1<cr+1$, and therefore $f_{X}(B)$ is bounded in
$\mathbf{R}^{n}$. Since the latter space is locally compact \cite[4.1.6]%
{BVtech}, there exists a compact $K\subset\mathbf{R}^{n}$ such that
$f_{X}(B)\subset K$. Then $B\subset f^{-1}(K)$. Since $f^{-1}$ is uniformly
continuous, $K$ is totally bounded in $X$, by Proposition \ref{feb28p3}.
Moreover, since $K$ is complete and $f_{X}^{-1}$ is bi-continuous, $f^{-1}(K)$
is complete and hence compact.
\end{proof}

%

\medskip

It is tempting to suggest that every element of a finite-dimensional q-normed
space is normable. Here is a Brouwerian counterexample. Let $\left(
a_{n}\right)  _{n\geq1}$ be a binary sequence with $a_{1}=0$. Define seminorms
$\left\Vert \ \right\Vert _{n}$ $(n\geq1)\ $on $\mathbf{R}$ as follows: for
each $x\in\mathbf{R}$,%
\[
\left\Vert x\right\Vert _{n}=\left\{
\begin{array}
[c]{ll}%
(1-a_{n})\left\vert x\right\vert  & \text{if }a_{n}=0\text{ or }a_{n-1}=1,\\
& \\
2\left\vert x\right\vert  & \text{if }a_{n}=1-a_{n-1}%
\end{array}
\right.
\]
Then $\left\Vert x\right\Vert _{n}\leq2\left\vert x\right\vert $ for all
$x\in\mathbf{R}$, so $\left(  \mathbf{R},\left(  \left\Vert \ \right\Vert
_{n}\right)  _{n\geq1}\right)  $ is a q-normed space. If%
\[
\left\Vert 1\right\Vert _{\infty}\equiv\sup\left\{  \left\Vert 1\right\Vert
_{n}:n\geq1\right\}
\]
exists, then either $1<\left\Vert 1\right\Vert _{\infty}$ or $\left\Vert
1\right\Vert _{\infty}<2$. In the first case, there exists $N$ such that
$\left\Vert 1\right\Vert _{n}>1$, so $a_{N}=1-a_{N-1}$ and therefore $a_{N}%
=1$. In the case $\left\Vert 1\right\Vert _{\infty}<2$, we must have $a_{n}=0$
for all $n$. Thus the statement

\begin{quote}
\emph{Every element of a finite-dimensional q-normed space is normable}
\end{quote}

%

\noindent
implies the essentially nonconstructive omniscience principle \textsf{LPO}:

\begin{quote}
For each binary sequence $\left(  a_{n}\right)  _{n\geq1}$ either $a_{n}=0$
for all $n$ or else there exists $N$ such that $a_{N}=1$
\end{quote}

%

\noindent
\textsf{Hence, a fortiori, the statement}

\begin{quote}
\emph{Every element of a finite-dimensional locally convex space is normable}
\end{quote}

%

\noindent
implies \textsf{LPO}.

Need the unit ball of a finite-dimensional q-normed space be totally bounded?
Consider the same q-normed space $\left(  \mathbf{R},\left(  \left\Vert
\ \right\Vert _{n}\right)  _{n\geq1}\right)  $ as in the preceding paragraph,
and suppose that its unit ball $B$ is totally bounded. Let $\left\{
x_{1},\ldots,x_{\nu}\right\}  $ be a finite $\frac{1}{4}$-approximation to $B$
relative to $\left(  \left\Vert \ \right\Vert _{n}\right)  _{n\geq1}$, and let
$\xi=\max_{1\leq k\leq\nu}x_{k}$. If there exists $N$ such that $a_{N}%
=1-a_{N-1}$, then $2\left\vert x_{k}\right\vert =\left\Vert x_{k}\right\Vert
_{N}\leq1$ for each $k\leq\nu$, so $\left\vert \xi\right\vert \leq\frac{1}{2}%
$. On the other hand, if $a_{n}=0$ for all $n$, then $1\in B$ and so there
exists $k\leq\nu$ such that $\left\vert 1-x_{k}\right\vert =\left\Vert
1-x_{k}\right\Vert _{n}<\frac{1}{4}$ for all $n$; whence $1-\xi\leq
1-x_{k}<\frac{1}{4}$ and therefore $\xi>\frac{3}{4}$. Now, either $\xi
>\frac{1}{2}$ or $\xi<\frac{3}{4}$. In the first case we cannot have $a_{N}=1$
for any $N$, so $a_{n}=0$ for all $n$; in the second, we cannot have $a_{n}=0$
for all $n$. Thus the statement\label{ere2}

\begin{quote}
\emph{The unit ball of a finite-dimensional q-normed space is totally bounded}
\end{quote}

%

\noindent
implies the omniscience principle \textsf{WLPO}:

\begin{quote}
For each binary sequence $\left(  a_{n}\right)  _{n\geq1}$ either $a_{n}=0$
for all $n$ or it is not the case that $a_{n}=0$ for all $n$.
\end{quote}

\section{For the future}

Perhaps the most obvious examples of q-normed spaces for deeper study are the
dual q-normed space $X^{\ast}\ $and its dual $X^{\ast\ast}$, where $\left(
X,\left(  \left\Vert \ \right\Vert _{i}\right)  _{i\in I}\right)  $ is a given
q-normed (but not necessarily either normed or uniformly convex) space. In
order to carry out such deeper study, it seems that we may need a version of
the Hahn-Banach theorem applicable to q-normed spaces (cf. \cite{Ishihara2}).
This is something to pursue in\ future research.%

\bigskip

\section{Disclaimer}

No longer having a copy of \cite{Johns}, and being unable to trace it online
(even in the University of Liverpool Library), in this article I may well have
re-proved some results originally due to Johns. However, there is definitely
original work of mine in the foregoing.%

\bigskip
%

\bigskip

\vfill

\begin{flushright}
\texttt{{\small dsbridges 240626}}
\end{flushright}


\begin{thebibliography}{99}                                                                                               %


\bibitem {Bishop}Errett Bishop, \emph{Foundations of Constructive Analysis},
McGraw-Hill, New York, 1967.

\bibitem {BB}E. Bishop and D.S. Bridges, \emph{Constructive Analysis},
Grundlehren der math. Wissenschaften \textbf{279}, Springer Verlag,
Heidelberg-Berlin-New York, 1985.

\bibitem {dsbconvex}D.S. Bridges, `Constructive Notes on Locally Convex
Spaces', 2026, online at
{\small https://workdrive.zoho.com/file/8z28q370440398d2549c38e8101224a1997e9}

\bibitem {BVtech}D.S. Bridges, L.S. V\^{\i}\c{t}\u{a}, \emph{Techniques of
Constructive Analysis}, Universitext, Springer New York, 2006.

\bibitem {OLG48}D.S. Bridges, L.S. V\^{\i}\c{t}\u{a}: `The constructive
uniqueness of the locally convex topology' in: \emph{From Sets and Types to
Topology and Analysis} (L. Crosilla and P. Schuster, eds), 304--315, Oxford
Logic Guides \textbf{48}, Clarendon Press, Oxford, 2005.

\bibitem {BIMc13}D.S. Bridges, H. Ishihara, and M. McKubre-Jordens, `Uniformly
convex Banach spaces are reflexive---constructively', Math. Logic. Quart.
\textbf{59}(4--5), 352--356, 2013.

\bibitem {Handbook}D.S. Bridges, H. Ishihara, M.J. Rathjen, H. Schwichtenberg
(editors),\emph{\ Handbook of Constructive Mathematics}, Encyclopedia of
Mathematics and Its Applications \textbf{185}, Cambridge University Press, 2023.

\bibitem {Ishihara2}H. Ishihara, `The constructive Hahn-Banach theorem,
revisited', in: \emph{Contemporary Logic and Computing }(A. Rezu\c{s}, ed),
638--663, Landscapes in Logic \textbf{1}, College Publications, Rickmansworth,
U.K., 2020.

\bibitem {Ishihara}H. Ishihara, `On the constructive Hahn-Banach theorem',
Bull. London. Math. Soc. \textbf{21}, 79--81, 1989.

\bibitem {Johns}D.L. Johns, \emph{A Constructive Approach to Duality and
Orlicz Spaces}, Ph.D. thesis, University of Liverpool, 1977.
\end{thebibliography}
\end{document}